\documentclass[a4paper]{amsart}

\usepackage{lmodern,microtype}
\usepackage{amsmath,amstext,amssymb,mathrsfs,amscd,amsthm,indentfirst}
\usepackage{amsfonts}
\usepackage[dvipsnames,svgnames,x11names,hyperref]{xcolor}

\usepackage[pagebackref,colorlinks,citecolor=Mahogany,linkcolor=Mahogany,urlcolor=Mahogany,filecolor=Mahogany]{hyperref}

\usepackage{booktabs}
\usepackage{verbatim}
\usepackage{enumerate}
\usepackage{graphicx}

\usepackage[utf8]{inputenc}
\usepackage{textcomp}
\usepackage{color}
\usepackage{mathtools}
\usepackage{multirow}
\usepackage{blindtext}
\usepackage{enumitem}
\usepackage{subcaption}
\usepackage{wrapfig}
\usepackage{float}
\usepackage{pst-node}
\usepackage{mathdots}
\usepackage{scalerel}
\usepackage{stackengine,wasysym}

\newcommand\reallywidetilde[1]{\ThisStyle{%
  \setbox0=\hbox{$\SavedStyle#1$}%
  \stackengine{-.1\LMpt}{$\SavedStyle#1$}{%
    \stretchto{\scaleto{\SavedStyle\mkern.2mu\AC}{.5150\wd0}}{.6\ht0}%
  }{O}{c}{F}{T}{S}%
}}

\usepackage{tikz, tikz-cd}
\usetikzlibrary{matrix,calc,positioning,arrows,decorations.pathreplacing,decorations.markings,patterns}
\tikzset{->-/.style={decoration={
			markings,
			mark=at position #1 with {\arrow{>}}},postaction={decorate}}}
\tikzset{-<-/.style={decoration={
					markings,
					mark=at position #1 with {\arrow{<}}},postaction={decorate}}}




\DeclareMathOperator{\Sym}{Sym}

\DeclareMathOperator{\id}{id}

\DeclareMathOperator{\Z}{\mathbb{Z}}
\DeclareMathOperator{\A}{\mathcal{A}}

\DeclareMathOperator{\R}{\mathbb{R}}

\DeclareMathOperator{\SP}{SP}
\DeclareMathOperator{\Th}{Th}

\DeclareMathOperator{\PL}{PL}
\DeclareMathOperator{\bbD}{\mathbb{D}}
\DeclareMathOperator{\bbS}{\mathbb{S}}


\mathchardef\ordinarycolon\mathcode`\:
\mathcode`\:=\string"8000
\begingroup \catcode`\:=\active
  \gdef:{\mathrel{\mathop\ordinarycolon}}
\endgroup

\usepackage{amsthm}
\theoremstyle{plain}
\newtheorem*{theorem*}{Theorem}
\newtheorem{athm}{Theorem}
    
\newtheorem{theorem}{Theorem}[section]

\newtheorem{proposition}[theorem]{Proposition}
\newtheorem*{question*}{Question}

\newtheorem{lemma}[theorem]{Lemma}
\newtheorem*{lemma*}{Lemma}

\newtheorem{corollary}[theorem]{Corollary}
\newtheorem*{corollary*}{Corollary}

\theoremstyle{definition}

\newtheorem{example}[theorem]{Example}

\theoremstyle{remark}

\newtheorem*{remark*}{Remark}

\numberwithin{equation}{section}

\author{Andreas Stavrou}
\address{Department of Mathematics,
University of Chicago,
5734 S. University Avenue,
Chicago, IL, 60637, United States of America}
\email{andreasstavrou@uchicago.edu}

\title{Superposition of configurations and scanning}

\date{\today}

\begin{document}

\maketitle
\begin{abstract}
We endow the cohomology of configuration spaces of a manifold with a product arising from superposing configurations. We prove that, under the scanning isomorphism, this product corresponds to the cup-product of the section space of the standard scanning bundle of the manifold.
\end{abstract}

\section*{Introduction}
Let $M$ be the interior of a  compact connected $d$-manifold, possibly with boundary. We are interested in the \textit{(unordered) configuration spaces} 
$$C_n(M):=\{(m_1,...,m_n)\in {M}^n:m_i\neq m_j \text{ if } i\neq j\}/\mathfrak{S}_n,$$
for $n\ge 0$,
where the symmetric group $\mathfrak{S}_n$ acts by permuting the coordinates.

The homology of $C_n(M)$ was classically studied using the \textit{scanning map}, originally due to \cite{McDuff1975,Segal73}, whose following incarnation is from \cite{bodigheimer88rationalcohomologyofsurfaces,BODIGHEIMER1989111generalconfigurations}. 
Let $T^+M$ be the fibrewise one-point compactification of the tangent bundle $TM$ of $M$, and let $T^+M\wedge_fS^{2l}$ be the fibrewise smash of $T^+M$ with $S^{2l}$, so that $T^+M\wedge_fS^{2l}$ is an $S^{d+2l}$-bundle over $M$. 
Denote by $\Gamma_{\partial}(M,S^{2l})$ the space of its sections over $M$ that map the boundary $\partial M$ constantly to the basepoint of the fibre.
\begin{athm}[\cite{bodigheimer88rationalcohomologyofsurfaces,BODIGHEIMER1989111generalconfigurations}]\label{thm:A}
For $l\ge 1$, there is an isomorphism of graded abelian groups
\begin{equation*}\label{eq:introscanningisomorphism}
s:{H}^*\left(\Gamma_{\partial}(M,S^{2l})\right)\longrightarrow \bigoplus_{n\ge 0}H^*(C_n(M))[2ln].
\end{equation*}
\end{athm}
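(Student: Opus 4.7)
The strategy is to factor the scanning isomorphism through the \emph{labeled configuration space}
\[
C(M;S^{2l}) \;=\; \Bigl(\coprod_{n\ge 0} C_n(M)\times_{\mathfrak{S}_n}(S^{2l})^n\Bigr)\big/\!\sim,
\]
where $\sim$ deletes any configuration point carrying the basepoint label of $S^{2l}$. The argument has three parts.

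First, I would construct a scanning map $\tilde{s}\colon C(M;S^{2l})\to \Gamma_\partial(M,T^+M\wedge_f S^{2l})$ in the style of Bödigheimer: at $x\in M$, use the exponential map to identify a small disc about $x$ with a neighbourhood of the origin in $T_xM$, read off the configuration points lying inside it together with their labels, and obtain a value in the fibre $T_x^+M\wedge S^{2l}$ by collapsing everything outside to the basepoint. Letting the radius shrink near $\partial M$ ensures $\tilde{s}$ sends $\partial M$ to the basepoint section.

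Second, and at the core of the argument, I would show $\tilde{s}$ is a weak equivalence by induction on a handle decomposition of $M$. For the base case $M=\mathbb{R}^d$ the classical McDuff--May--Segal approximation theorem identifies both sides with $\Omega^d S^{d+2l}$; the inductive step compares the labeled configuration functor and the section functor along each handle attachment via a quasifibration / Mayer--Vietoris argument. The hypothesis $l\ge 1$ enters here precisely to make $S^{2l}$ connected, so that $C(M;S^{2l})$ is itself connected and no group completion is required. I expect this step to be the main obstacle, especially the delicate verification of quasifibration conditions near $\partial M$ and near the attaching regions of handles.

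Third, I would compute $H^*(C(M;S^{2l}))$ via the increasing filtration $F_n$ by configurations of at most $n$ non-basepoint-labeled points. The cofibres are
\[
F_n/F_{n-1}\;\cong\;(C_n(M))_+\wedge_{\mathfrak{S}_n}(S^{2l})^{\wedge n}.
\]
Since $2l$ is even, the transposition of two smash factors of $S^{2l}$ has degree $(-1)^{(2l)^2}=+1$, so $\mathfrak{S}_n$ acts trivially on the top cohomology of $(S^{2l})^{\wedge n}\cong S^{2ln}$, giving
\[
\widetilde{H}^*(F_n/F_{n-1})\;\cong\;H^{*-2ln}(C_n(M)).
\]
In any fixed cohomological degree only finitely many $n$ contribute, and a Snaith-style stable splitting of $C(M;S^{2l})$ (equivalently, the fact that $F_n/F_{n-1}$ is $(2ln-1)$-connected so that the differentials in the filtration spectral sequence have no room to act) shows the filtration splits on cohomology. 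Pulling back along $\tilde{s}^*$ then yields the claimed isomorphism $s$.
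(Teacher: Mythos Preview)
Your outline is the classical Bödigheimer argument and is correct, with one caveat below. The paper itself does not re-prove Theorem~A from scratch: it cites \cite{Tillmanthorpe2014} for your steps~1 and~2 (the weak equivalence $C(M;S^{2l})\simeq\Gamma_\partial$), and for your step~3 it takes a genuinely different route. Rather than appealing to a Snaith-type stable splitting, the paper writes $V=\bigvee_{k\ge1}D_k$ and defines an explicit \emph{multivalued} scanning map
\[
\sigma\colon C(M;S^{2l})\longrightarrow \SP(V),\qquad \xi_I\longmapsto \sum_{J\subseteq I}\xi_J,
\]
then shows $\sigma^*\colon\widetilde H^*(V)\to\widetilde H^*(C)$ is an isomorphism by a filtration-wise $5$-lemma argument (Proposition~\ref{prop:sigmaisiso}). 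The payoff of this approach is that $\sigma$ is visibly compatible with the diagonal on $C$ and with an explicit ``split-the-index-set'' coproduct $\Phi$ on $V$, which is exactly what is needed for the multiplicativity statement (Theorem~B). Your Snaith-splitting argument establishes Theorem~A just as well, but gives no handle on the ring structure.

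The caveat: your parenthetical ``equivalently, the fact that $F_n/F_{n-1}$ is $(2ln-1)$-connected so that the differentials in the filtration spectral sequence have no room to act'' is not correct. The $d_1$-differential runs $\widetilde H^k(D_n)\to\widetilde H^{k+1}(D_{n+1})$; since $\widetilde H^k(D_n)\cong H^{k-2ln}(C_n(M))$ can be nonzero up to $k=(2l+d)n$, and the target is nonzero as soon as $k\ge 2l(n+1)-1$, there \emph{is} room for nontrivial differentials once $dn\ge 2l-1$. So connectivity alone does not force degeneration; you really do need the Snaith splitting (or the paper's $\sigma$-argument) to conclude that the filtration splits on cohomology.
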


The domain of $s$ is naturally a graded ring with the cup-product; our aim is to exhibit a meaningful interpretation of this ring structure on the target of $s$. Consider the coloured configuration space $C_{n,m}(M)$ of $n$ blue points and $m$ red points, all distinct. There are maps 
\begin{center}
    \begin{tikzcd}
        C_n(M)\times C_m(M) & C_{n,m}(M)\lar[hook,"i"']\rar["p"] & C_{n+m}(M),
    \end{tikzcd}
\end{center}
where $i$ is the open inclusion, and $p$ is the ${n+m \choose m}$-sheeted covering map which forgets the colours.
Combining K\"unneth, the cohomology map $i^*$ and the transfer map $p_!$, we obtain the \textit{superposition product} $${\sup}_{n,m}:=p_!\circ i^*:H^*(C_n(M))\otimes H^*(C_m(M))\longrightarrow H^*(C_{n+m}(M)),$$
for $n,m\ge 0$. In Section \ref{sec:alternativedefinition}, we give an alternative but equivalent definition which justifies the name ``superposition'' and appears in \cite{Moriyama,bianchi2021mapping,randalwilliams2023configuration}.

\begin{athm}[Theorem \ref{thm:main}]\label{thm:B}
The isomorphism $s$ is multiplicative with the cup-product on ${H}^*(\Gamma_{\partial}(M,S^{2l}))$ and the superposition product on $\bigoplus_{n\ge 0}H^*(C_n(M))[2ln]$.
\end{athm}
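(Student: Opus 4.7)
I would prove the theorem by translating both products into space-level data and analysing their compatibility via a scanning-type construction on the coloured configuration space $C_{n,m}(M)$. Writing $\alpha\colon \bigsqcup_{n\ge 0} C_n(M)\to \Gamma_{\partial}(M,S^{2l})$ for the scanning map underlying $s$, so that $s=\alpha^*$, the cup product is $\Delta^*\circ \times$ for the diagonal $\Delta$ of $\Gamma_{\partial}(M,S^{2l})$, while the superposition product is $p_!\circ i^*\circ \times$. Multiplicativity of $s$ is therefore equivalent to the identity
\[
\alpha^*\circ \Delta^*\;=\;p_!\circ i^*\circ (\alpha\times \alpha)^*
\]
of maps $H^*(\Gamma_{\partial}(M,S^{2l})\times \Gamma_{\partial}(M,S^{2l}))\to H^*(C_{n+m}(M))$, one for each pair $(n,m)$.

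\textbf{Key input.} The crucial geometric fact is that scanning is \emph{additive} on disjointly-supported subconfigurations: with scanning radii chosen so that the local supports of $\alpha(\xi_B)$ and $\alpha(\xi_R)$ are disjoint in $M$, the section $\alpha(\xi_B\sqcup \xi_R)$ coincides with the pointwise superposition of $\alpha(\xi_B)$ and $\alpha(\xi_R)$, since the two do not interact. I would use this to define, on a suitable open cover of $C_{n,m}(M)$ where the disjoint-support condition holds, a fibrewise ``superposition of sections'' operation $\mu$ on pairs in $\Gamma_{\partial}(M,S^{2l})\times \Gamma_{\partial}(M,S^{2l})$ with almost disjoint support, and then establish a homotopy $\alpha\circ p\simeq \mu\circ (\alpha\times \alpha)\circ i$ as maps $C_{n,m}(M)\to \Gamma_{\partial}(M,S^{2l})$. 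The assumption $l\ge 1$ is exactly what provides the fibrewise $S^{2l}$ room needed to keep $\mu$ coherently defined through the deformation.

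\textbf{Finishing and main obstacle.} Granted this space-level compatibility, the desired cohomological identity follows by combining naturality with the projection formula for the transfer $p_!$ along the $\binom{n+m}{n}$-sheeted covering $p$: intuitively, $\Delta\circ \alpha\circ p$ distributes over the colour decompositions of a configuration in $C_{n+m}(M)$, and this distribution is precisely what the transfer implements. I expect the alternative definition of superposition advertised in Section \ref{sec:alternativedefinition} to formalise this distribution directly and streamline the argument, perhaps even bypassing the need to introduce $\mu$ explicitly. The principal technical obstacle is upgrading the partial fibrewise operation $\mu$ to a fully coherent construction, or equivalently, making a flexible choice of scanning radii that is canonical up to homotopy over all of $C_{n,m}(M)$ while respecting the symmetric-group quotients defining the unordered configuration spaces and the boundary conditions on sections. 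Once this homotopy is in place, the remaining steps are formal manipulations with transfers.
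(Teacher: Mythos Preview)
Your proposal has a genuine gap at the step where you pass from the space--level homotopy to the cohomological identity. Even granting a homotopy $\alpha\circ p \simeq \mu\circ(\alpha\times\alpha)\circ i$ on $C_{n,m}(M)$, what you obtain in cohomology is
\[
p^*\circ\alpha^* \;=\; i^*\circ(\alpha\times\alpha)^*\circ\mu^*,
\]
an identity involving $p^*$ and $\mu^*$, whereas the multiplicativity statement requires
\[
\alpha^*\circ\Delta^* \;=\; p_!\circ i^*\circ(\alpha\times\alpha)^*,
\]
involving the transfer $p_!$ and the diagonal $\Delta^*$. The projection formula for $p_!$ does not convert the first identity into the second: it yields relations of the type $p_!(p^*x\smile y)=x\smile p_!y$, which say nothing about replacing $\mu^*$ by $\Delta^*$. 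And this replacement is the whole point: your $\mu$ is only defined on pairs of sections with \emph{disjoint} support, while the image of $\Delta$ consists of pairs with \emph{identical} support, so there is no direct comparison between $\mu^*$ and $\Delta^*$ at the level you are working.

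The paper resolves exactly this tension, but not on the section space. It works instead with the scanning map $\sigma:C\to\SP(V)$ into symmetric products of the filtration quotients $D_k$, and introduces \emph{two} comultiplications on $V$: one, $\Phi$, splits a configuration into \emph{disjoint} pieces and corresponds under Thom isomorphisms to the superposition product; the other, $\Psi$, splits into possibly \emph{overlapping} pieces and satisfies $\Psi\circ\sigma=(\sigma\triangle\sigma)\circ\Delta$ on the nose. The substantive step is then the based homotopy $\Phi\simeq\Psi$, proved via a Thom--space argument (Lemma~\ref{lem:configurationmapnullhomotopic}) that produces an explicit null--homotopy of the ``overlap'' terms; this is precisely where the hypothesis $l\ge 1$ and the evenness of $2l$ are used, to build a nowhere--vanishing section transverse to the relevant bundle map. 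Your sketch has no analogue of $\Psi$ or of this null--homotopy, and the sentence ``$l\ge1$ provides the fibrewise $S^{2l}$ room needed to keep $\mu$ coherently defined'' does not substitute for it: the room is needed not to define $\mu$, but to kill the difference between the disjoint and overlapping decompositions.
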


To prove Theorem \ref{thm:B}, we analyse the topological definition of the map $s$ following \cite{Tillmanthorpe2014} which involves symmetric products. In Section \ref{sec:symmetricproductsetc}, we set up the relevant preliminaries on symmetric products and multivalued functions.

\subsection*{Acknowledgements} The author would like to thank his PhD advisor Oscar Randal-Williams for the conversations which led to this paper. The paper was written while the author was affiliated with the University of Cambridge and funded by the Engineering and Physical Sciences Research Council (project reference: 2261124).

\section{Symmetric products and multivalued functions}\label{sec:symmetricproductsetc}

Given a based space $(X,e)$, the \textit{symmetric product} $\SP(X)$ is the free unital abelian monoid on $X$ with identity $e$, topologised as the quotient
\begin{equation*}
    \SP(X):=\Big(\bigsqcup_{n\ge 1} X^n/\mathfrak{S}_n \Big)\Big/\sim
\end{equation*}
where $\sim$ identifies $(x_1,...,x_n,e)\sim (x_1,...,x_n)$. In other words, elements of $\SP(X)$ are formal sums $\sum_{i\in I} x_i$ with $x_i\in X$ and $I$ a finite indexing set. If $X$ is an unbased space, we set $\SP^+(X)=\SP(X\sqcup *)$, with $*$ the basepoint.

A \textit{multivalued function} of based spaces is a based map $(A,*)\to (\SP(B),e)$. A multivalued function of unbased spaces is a map $A\to \SP^+(B)$.

\subsection{Cohomology maps}\label{sec:DoldThomCohomology}
We explain how multivalued functions induce cohomology maps\footnote{This trick was suggested by Oscar Randal-Williams.}. In literature, the Dold--Thom isomorphism \cite{DoldThom} is usually used to define homology maps instead.

We fix the abelian monoid $\SP(S^n)$ as our model of $K(\Z,n)$ (it is indeed a model by the Dold--Thom theorem). Denoting by $[X,Y]_*$ the set of based maps $X\to Y$ up to based homotopy, we identify the functors of based spaces $\widetilde{H}^n(-)=[-,\SP(S^n)]_*$.
Every map $\alpha:Y\to \SP(S^n)$, representing a cohomology class $[\alpha]\in\widetilde{H}^n(Y)$, factors uniquely through 
\begin{center}
    \begin{tikzcd}
        Y\rar[hook, "\iota_Y"]& \SP(Y)\rar["\widetilde{\alpha}"] & \SP(S^n), 
    \end{tikzcd}
\end{center}
where $\iota_Y$ is the natural inclusion of $Y$ into $\SP(Y)$ and $\widetilde{\alpha}$ is a continuous homomorphism. Now, given a map $f:X\to \SP(Y)$,
we define $$f^*:\widetilde{H}^*(Y)\to \widetilde{H}^*(X)$$ by mapping $[\alpha]\mapsto [\widetilde{\alpha}\circ f]\in [X,K(\Z,n)]_*$.
\begin{proposition}[Naturality]\label{prop:fstarnaturality}
    Let $h:(Y_1,y_1)\to (Y_2,y_2)$ be a based map so that the diagram of based maps
    \begin{equation}\label{eq:naturalityoffstar}
        \begin{tikzcd}
            (X_1,x_1)\dar["g"]\rar["f_1"]& (\SP(Y_1),y_1)\dar["\SP(h)"]\\
            (X_2,x_2)\rar["f_2"] & (\SP(Y_2),y_2)
        \end{tikzcd}
    \end{equation}
    commutes. Then $f_1^*\circ h^*=g^*\circ f_2^*$.
\end{proposition}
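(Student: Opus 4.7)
The plan is to show that the two sides agree on the level of representatives, not merely up to based homotopy. Fix a class in $\widetilde{H}^n(Y_2)$ represented by a based map $\alpha:(Y_2,y_2)\to (\SP(S^n),e)$. Unwinding the construction of $f_i^*$, $h^*$, $g^*$ directly from the definitions just above Proposition~\ref{prop:fstarnaturality}, one gets
\[
f_1^*\bigl(h^*[\alpha]\bigr)=\bigl[\,\widetilde{\alpha\circ h}\circ f_1\,\bigr],\qquad g^*\bigl(f_2^*[\alpha]\bigr)=\bigl[\,\widetilde{\alpha}\circ f_2\circ g\,\bigr].
\]
Commutativity of the square \eqref{eq:naturalityoffstar} rewrites the right-hand representative as $\widetilde{\alpha}\circ \SP(h)\circ f_1$. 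So the claim reduces to the equality of continuous maps
\[
\widetilde{\alpha\circ h}\;=\;\widetilde{\alpha}\circ \SP(h)\colon \SP(Y_1)\longrightarrow \SP(S^n).
\]

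To verify this equality I would invoke the universal property of $\SP$. Both sides are based continuous homomorphisms of abelian monoids: the left by the very definition of the tilde construction, the right because $\SP(h)$ and $\widetilde{\alpha}$ are each such homomorphisms. Restricting both sides along the canonical inclusion $\iota_{Y_1}:Y_1\hookrightarrow \SP(Y_1)$ and using naturality of $\iota$ (which gives $\SP(h)\circ\iota_{Y_1}=\iota_{Y_2}\circ h$), each side evaluates to $\alpha\circ h$. By the uniqueness clause of the universal property (a continuous abelian-monoid homomorphism out of $\SP(Y_1)$ is determined by its restriction to $Y_1$), the two homomorphisms coincide.

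The only nontrivial input is this uniqueness, which in turn comes from the construction $\SP(Y_1)=\bigl(\bigsqcup_n Y_1^n/\mathfrak{S}_n\bigr)/\sim$: every element is a finite sum of points of $Y_1$, the quotient maps $Y_1^n/\mathfrak{S}_n\to\SP(Y_1)$ are jointly surjective and continuous, and on such a sum a continuous homomorphism is forced to equal the corresponding sum of the values on $Y_1$. So there is no genuine obstacle; the proof is essentially a matter of carefully assembling the uniqueness of the extension and the commutativity of \eqref{eq:naturalityoffstar}.
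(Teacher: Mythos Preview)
Your proof is correct and follows essentially the same line as the paper's: compute both sides on a representative $\alpha$, use the commutativity of \eqref{eq:naturalityoffstar} to rewrite $\widetilde{\alpha}\circ f_2\circ g$ as $\widetilde{\alpha}\circ\SP(h)\circ f_1$, and then observe that $\widetilde{\alpha\circ h}$ and $\widetilde{\alpha}\circ\SP(h)$ are continuous monoid homomorphisms agreeing on the generating set $Y_1\subset\SP(Y_1)$, hence equal. The only difference is that you spell out the uniqueness clause of the universal property in slightly more detail than the paper does.
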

\begin{proof}
    Let $\alpha:(Y_2,y_2)\to (\SP(S^n),e)$ represent a class in $\widetilde{H}^*(Y_2)$. Then we have $(f_1^*\circ h^*)([\alpha])=[\widetilde{\alpha\circ h}\circ f_1]$ and $$(g^*\circ f_2^*)([\alpha])=[\widetilde{\alpha}\circ f_2\circ g]=[\widetilde{\alpha}\circ\SP(h)\circ f_1]$$ where the last equality is by the commutativity of diagram \eqref{eq:naturalityoffstar}. Now it suffices to see that the maps $\widetilde{\alpha\circ h}$ and $\widetilde{\alpha}\circ\SP(h)$ are equal: indeed, both maps are homomorphisms restricting to the same map $\alpha\circ h$ on the generating set $Y_1$ of $\SP(Y_1)$.
\end{proof}

It is easy to check that if $f$ is really a single-valued map $f:X\to Y$, then this definition of $f^*$ agrees with the standard definition.

\begin{example}\label{rem:coveringsandtransfers}
By ``inverting'' a finite-sheeted covering map $p:\widetilde{X}\to X$, we obtain a multivalued function $p^{-1}:X\to \SP^+(\widetilde{X})$ mapping
$${x}\longmapsto \sum_{\widetilde{x}\in p^{-1}({x})}\widetilde{x}.$$
Similarly, if $p:(\widetilde{X},e)\to (\SP(X),e)$ is a covering space away from the basepoint, we get a based multivalued function $p^{-1}:(X,e)\to (\SP(\widetilde{X}),e)$. The cohomology map $(p^{-1})^*$ coincides with the transfer maps $p_!$ in both the based and unbased case.
\end{example}

\subsection{Products of maps}
Given two based multivalued functions $f:A\to \SP(B)$ and $g:C\to \SP(D)$, there is a multivalued function $f\triangle g:A\wedge C\to \SP(B\wedge D)$ given by
$$(f\triangle g)(a,c)=\sum_{(i,j)\in I\times J} (a_i,c_j)$$
where $f(a)=\sum_I a_i$ and $g(c)=\sum c_i$. 
More precisely, $f\triangle g$ is the composition of the tautological continuous map \begin{equation*}
\iota_{X,Y}:\SP(X)\wedge \SP(Y)\to \SP(X\wedge Y),
\end{equation*}
and  
the continuous maps $f\wedge g:A\wedge C\to \SP(X)\wedge \SP(Y)$.

Under the identification $\widetilde{H}^n(-)=[-,\SP(S^n)]_*$, the cross product $\times$ is the composition 
\begin{center}
    \begin{tikzcd}
        \left [X,\SP(S
^m)\right ]_*\times \left [Y,\SP(S
^n)\right]_*\dar["-\wedge -"] \\
        \left[X\wedge Y,\SP(S
^m)\wedge \SP(S
^n)\right]_*\dar["\iota_{m,n}\circ -"] \\
        \left[X\wedge Y,\SP(S
^{m+n})\right]_*,
    \end{tikzcd}
\end{center}
where $\iota_{m,n}:=\iota_{S^m,S^n}:\SP(S
^m)\wedge \SP(S
^n)\to \SP(S
^{m+n})$ represents a generator of $H^{m+n}(\SP(S
^m)\wedge \SP(S
^n))\cong \Z$.
We thus obtain a diagram
\begin{equation}\label{eq:diagramcohomologycrosscommutes}
     \begin{tikzcd}
    \widetilde{H}^*(C)\otimes \widetilde{H}^*(D) \rar["f^*\otimes g^*"]\dar["\times"]& \widetilde{H}^*(A)\otimes \widetilde{H}^*(B) \dar["\times"]\\
    \widetilde{H}^*(C\wedge D)\rar["(f\triangle g)^*"] &
    \widetilde{H}^*(A\wedge B).
\end{tikzcd} 
\end{equation}
\begin{proposition}\label{prop:tensorandcohomologymapscommute}
    The diagram \eqref{eq:diagramcohomologycrosscommutes} commutes.
\end{proposition}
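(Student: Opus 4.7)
The plan is to evaluate both composites in the diagram on a pair of class representatives and verify that the resulting based maps into $\SP(S^{m+n})$ literally coincide. Pick representatives $\alpha\colon C\to \SP(S^m)$ and $\beta\colon D\to \SP(S^n)$. Going through the upper-right of the diagram, $f^*\otimes g^*$ replaces these by $\widetilde{\alpha}\circ f$ and $\widetilde{\beta}\circ g$, and the cross product then yields the representative
\[
\iota_{m,n}\circ\bigl((\widetilde{\alpha}\circ f)\wedge(\widetilde{\beta}\circ g)\bigr)=\iota_{m,n}\circ(\widetilde{\alpha}\wedge\widetilde{\beta})\circ(f\wedge g).
\]

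Going through the lower-left instead, the cross product first yields the map $\iota_{m,n}\circ(\alpha\wedge\beta)\colon C\wedge D\to \SP(S^{m+n})$, and then $(f\triangle g)^*$ pulls this back through the factorisation $f\triangle g=\iota_{C,D}\circ(f\wedge g)$ to the representative
\[
\widetilde{\iota_{m,n}\circ(\alpha\wedge\beta)}\circ\iota_{C,D}\circ(f\wedge g).
\]
Comparing these, the proposition reduces to the identity of based maps $\SP(C)\wedge\SP(D)\to\SP(S^{m+n})$
\[
\iota_{m,n}\circ(\widetilde{\alpha}\wedge\widetilde{\beta})=\widetilde{\iota_{m,n}\circ(\alpha\wedge\beta)}\circ\iota_{C,D}.
\]

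To verify this last identity, I would evaluate both sides on a generic element $(\sum_i c_i,\sum_j d_j)$ of $\SP(C)\wedge\SP(D)$, writing $\alpha(c_i)=\sum_a s_{i,a}$ and $\beta(d_j)=\sum_b t_{j,b}$; both sides then expand to the same formal sum $\sum_{i,j,a,b}(s_{i,a}\wedge t_{j,b})$ in $\SP(S^{m+n})$, using the bilinear-type distributivity built into $\iota_{m,n}$ together with the fact that $\widetilde{\alpha},\widetilde{\beta}$ are homomorphic extensions of $\alpha,\beta$. The only real obstacle is the notational bookkeeping of the nested formal sums --- one layer coming from elements of $\SP(C),\SP(D)$ and a second layer inside the values $\alpha(c_i),\beta(d_j)$ themselves --- and no homotopy is required since the two composites agree strictly, not merely up to based homotopy.
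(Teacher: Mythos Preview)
Your proof is correct and follows essentially the same route as the paper's: evaluate both composites on representatives, use the factorisation $f\triangle g=\iota_{C,D}\circ(f\wedge g)$, and reduce to the identity $\iota_{m,n}\circ(\widetilde{\alpha}\wedge\widetilde{\beta})=\widetilde{\iota_{m,n}\circ(\alpha\wedge\beta)}\circ\iota_{C,D}$. The only difference is in how this last identity is verified: you compute elementwise on a generic pair of formal sums, whereas the paper draws the commutative diagram and appeals to uniqueness of homomorphic extensions; both arguments are valid and amount to the same check.
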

\begin{proof}
    We evaluate the two directions of the diagram on a representative $\alpha\otimes \beta$ for a class in $\widetilde{H}^*(C)\otimes \widetilde{H}^*(D)$. Going right-then-down, we have
    \begin{align*}
        \alpha\otimes \beta &\mapsto (\widetilde{\alpha}\circ f )\otimes (\widetilde{\beta}\circ g)\\
        &\mapsto \iota_{n,m}\circ \big((\widetilde{\alpha}\circ f)\wedge (\widetilde{\beta}\circ g)\big)=\iota_{n,m}\circ (\widetilde{\alpha}\wedge \widetilde{\beta})\circ (f\wedge g),
    \end{align*}
    whereas going down-then-right,
        \begin{align*}
        \alpha\otimes \beta &\mapsto \iota_{n,m}\circ ({\alpha}\wedge {\beta})\\
        & \mapsto \reallywidetilde{\iota_{n,m}\circ ({\alpha}\wedge {\beta})}\circ (f\triangle g).
    \end{align*}
    These are equal because the diagram 
    \begin{center}
        \begin{tikzcd}
             & C\wedge D\rar["\alpha \wedge \beta"]\dar["\iota_C\wedge\iota_D"] & \SP(S^n)\wedge \SP(S^m)\rar["\iota_{n,m}"]& \SP(S^{n}\wedge S^m)\\
             &\SP(C)\wedge \SP(D)\arrow[ur,"\widetilde{\alpha}\wedge \widetilde{\beta}"]\dar["\iota_{C,D}"] & &\\
             A\wedge B\rar["f\triangle g"]\arrow[ur,"f\wedge g"]& \SP(C\wedge D)\arrow[rruu,"\reallywidetilde{\iota_{n,m}\circ ({\alpha}\wedge {\beta})}"'] & & 
        \end{tikzcd}
    \end{center}
    commutes: every tilded map is the unique homomorphic extension of its untilded counterpart.
\end{proof}

\subsection{Suspensions and cofibrations}
Let $\Sigma X=S^1\wedge X$ be the \textit{reduced suspension} of a based space $X$. A based multivalued function $f:(X,x)\to (\SP(Y),y)$ induces a suspension map $\Sigma f:=\id_{S^1}\triangle f:(\Sigma X,\Sigma x)\to (\SP(\Sigma Y),\Sigma y)$.
\begin{corollary}\label{cor:fstarwithsuspensions}
    The suspended map $\Sigma f$ commutes with the suspension isomorphism, that is, if $s_Z:\widetilde{H}^*(Z)\to \widetilde{H}^{*+1}(\Sigma Z)$ is the suspension of space $Z$, then $s_X\circ (\Sigma f)^*=f^*\circ s_Y$. 
\end{corollary}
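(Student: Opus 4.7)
The plan is to derive this as a direct consequence of Proposition \ref{prop:tensorandcohomologymapscommute}, taking one of the two multivalued functions to be the identity on the circle. Recall that the suspension isomorphism $s_Z:\widetilde{H}^*(Z)\to \widetilde{H}^{*+1}(\Sigma Z)$ is implemented by the cross product with a fixed generator $\sigma\in \widetilde{H}^1(S^1)$, namely $s_Z(\beta)=\sigma\times \beta$. This is the only substantive external fact I need.

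I would then apply diagram \eqref{eq:diagramcohomologycrosscommutes} to the pair of based multivalued functions
\[
\id_{S^1}:S^1\longrightarrow S^1\hookrightarrow \SP(S^1),\qquad f:(X,x)\longrightarrow (\SP(Y),y),
\]
noting that $\id_{S^1}\triangle f$ is by definition $\Sigma f$. Since the first factor is an honest single-valued map, $\id_{S^1}^*$ coincides with the usual identity on $\widetilde{H}^*(S^1)$, as remarked after Proposition \ref{prop:fstarnaturality}. Proposition \ref{prop:tensorandcohomologymapscommute} thus yields a commutative square
\begin{equation*}
    \begin{tikzcd}
        \widetilde{H}^*(S^1)\otimes \widetilde{H}^*(Y)\rar["\id\,\otimes\, f^*"]\dar["\times"] & \widetilde{H}^*(S^1)\otimes \widetilde{H}^*(X)\dar["\times"]\\
        \widetilde{H}^{*+1}(\Sigma Y)\rar["(\Sigma f)^*"] & \widetilde{H}^{*+1}(\Sigma X).
    \end{tikzcd}
\end{equation*}

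Finally, I would chase a class of the form $\sigma\otimes \alpha$ with $\alpha\in \widetilde{H}^*(Y)$ around the square. Going down-then-right gives $(\Sigma f)^*(\sigma\times \alpha)=(\Sigma f)^*(s_Y(\alpha))$, while going right-then-down gives $\sigma\times f^*(\alpha)=s_X(f^*(\alpha))$. Equating these yields $(\Sigma f)^*\circ s_Y=s_X\circ f^*$, as required.

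There is no real obstacle here: the content of the corollary is just that the suspension isomorphism is a cross product with $\sigma$, combined with the previously-established compatibility of $f^*$ with cross products. If anything, the only thing to be careful about is the sign convention in defining $s_Z$, but since the same convention is applied on both sides of the diagram, the conclusion is insensitive to this choice.
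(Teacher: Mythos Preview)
Your argument is correct and is essentially the same as the paper's own proof: both apply Proposition~\ref{prop:tensorandcohomologymapscommute} with one factor equal to $\id_{S^1}$ and identify the resulting cross-product maps with the suspension isomorphisms. Your version simply spells out the diagram chase and the role of the generator $\sigma\in\widetilde{H}^1(S^1)$ in more detail.
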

\begin{proof}
    Apply Proposition \ref{prop:tensorandcohomologymapscommute} to $C=A=S^1$ and $f=\id_{S^1}$. The cross-product maps are the suspension isomorphisms in cohomology.
\end{proof}

\begin{lemma}\label{lem:fstarandLES}
    Let \begin{tikzcd}
        A_i\rar["f_i"]& B_i\rar["g_i"] & C_i
    \end{tikzcd} be based cofibration sequences for $i=1,2$, and suppose the diagram of based maps 
    \begin{equation*}
        \begin{tikzcd}
            A_1\rar["f_1"]\dar["a"]& B_1\rar["g_1"]\dar["b"] & C_1\dar["c"]\\
            \SP(A_2)\rar["\SP(f_2)"]& \SP(B_2)\rar["\SP(g_2)"]&\SP(C_2)
        \end{tikzcd}
    \end{equation*}
    commutes. Then the induced cohomology maps $a^*,b^*,c^*$ commute with the long exact sequences of the two cofibration sequences, that is the diagram
    \begin{equation*}
        \begin{tikzcd}
        ...\rar["\partial^*_2"]& \widetilde{H}^*(C_2)\rar["g_2^*"]\dar["c^*"] & \widetilde{H}^*(B_2)\rar["f_2^*"]\dar["b^*"]  & \widetilde{H}^*(A_2)\dar["a^*"] \rar["\partial_2^*"] & \widetilde{H}^{*+1}(C_2)\rar["g_2^*"]\dar["c^*"]& ... \\
        ...\rar["\partial^*_1"]& \widetilde{H}^*(C_1)\rar["g_1^*"] & \widetilde{H}^*(B_1)\rar["f_1^*"] & \widetilde{H}^*(A_1)\rar["\partial_1^*"] & \widetilde{H}^{*+1}(C_1)\rar["g_1^*"]&...
    \end{tikzcd}
    \end{equation*}
    commutes.
\end{lemma}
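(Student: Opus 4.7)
My plan is to extend the commuting ladder of cofibration sequences one step further via the Puppe construction and then invoke Proposition \ref{prop:fstarnaturality} and Corollary \ref{cor:fstarwithsuspensions}. The two squares of the long exact sequences involving $f_i^*$ and $g_i^*$ commute by a direct application of Proposition \ref{prop:fstarnaturality} to the two squares of the hypothesis; this handles all but the connecting-homomorphism square.

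For the square involving $\partial^*$, recall that $\partial_i^* = \delta_i^* \circ s_{A_i}$, where $\delta_i : C_i \to \Sigma A_i$ is the Puppe connecting map obtained by modelling $C_i$ as the mapping cone $B_i \cup_{A_i} CA_i$ and collapsing $B_i$ to the basepoint. I would show that the extended square
\begin{equation*}
\begin{tikzcd}
C_1 \dar["c"] \rar["\delta_1"] & \Sigma A_1 \dar["\Sigma a"] \\
\SP(C_2) \rar["\SP(\delta_2)"] & \SP(\Sigma A_2)
\end{tikzcd}
\end{equation*}
commutes up to based homotopy, where $\Sigma a = \id_{S^1} \triangle a$ from Section \ref{sec:symmetricproductsetc}. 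Given this, Proposition \ref{prop:fstarnaturality} yields $c^* \circ \delta_2^* = \delta_1^* \circ (\Sigma a)^*$, which combined with Corollary \ref{cor:fstarwithsuspensions} (providing $(\Sigma a)^* \circ s_{A_2} = s_{A_1} \circ a^*$) gives the target equation $c^* \circ \partial_2^* = \partial_1^* \circ a^*$. The analogous argument — now a routine diagram chase using only Proposition \ref{prop:fstarnaturality} — handles the shifted connecting-map squares further down the long exact sequence.

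The main obstacle is the verification of the extended square. Since $c$ restricted to $B_1 \subset C_1$ equals $\SP(g_2) \circ b$, the composition $\SP(\delta_2) \circ c$ is constant on $B_1$ and descends to a map $\bar{c} : \Sigma A_1 \cong C_1/B_1 \to \SP(\Sigma A_2)$. To identify $\bar{c}$ with $\Sigma a$ up to homotopy, I would use the cofibration $A_1 \hookrightarrow B_1$ to replace $c$, within its based homotopy class rel $B_1$, by a map whose restriction to the cone $CA_1 \subset C_1$ is the natural cone extension $Ca : CA_1 \to \SP(CA_2)$, $[x, t] \mapsto \sum_i [a_i, t]$ (for $a(x) = \sum_i a_i$); for this representative, $\bar{c} = \Sigma a$ strictly, and one sees directly that both compositions $\SP(\delta_2) \circ c$ and $\Sigma a \circ \delta_1$ are constantly the basepoint on $B_1$ and equal $\sum_i [a_i, t]$ on $CA_1$. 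The rectification step is the technical crux and amounts to the standard naturality of the Puppe construction adapted to the $\SP$-target setting.
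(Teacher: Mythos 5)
Your overall architecture (the $f$- and $g$-squares by Proposition \ref{prop:fstarnaturality}; the connecting map via the cone construction, the suspension isomorphism, Corollary \ref{cor:fstarwithsuspensions}) is the same as the paper's, but the key step of your argument has a genuine gap. You propose to ``replace $c$, within its based homotopy class rel $B_1$, by a map whose restriction to the cone $CA_1\subset C_1$ is the natural cone extension $Ca$''. Nothing in the hypotheses justifies this: they constrain $c$ only through $c\circ g_1=\SP(g_2)\circ b$, i.e.\ on (the image of) $B_1$, and two based maps out of $B_1\cup_{A_1}CA_1$ that agree on $B_1$ need not be homotopic rel $B_1$ --- their ``difference'' is an arbitrary based map $\Sigma A_1\to\SP(C_2)$. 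In fact, in the mapping-cone model you are implicitly using (where $g_1$ is the non-surjective inclusion $B_1\hookrightarrow C_1$), the lemma itself is false: given any admissible $c$ one may form $c'(x)=c(x)+\gamma(q(x))$, with $q:C_1\to C_1/B_1\cong\Sigma A_1$ the collapse and $\gamma:\Sigma A_1\to\SP(C_2)$ arbitrary, using the monoid structure of $\SP(C_2)$; the hypotheses still hold, but $(c')^*=c^*+q^*\circ\gamma^*$ generally breaks compatibility with $\partial^*$. So the rectification you call ``the technical crux'' is not a consequence of ``standard naturality of the Puppe construction'' --- it is exactly where the missing input lives.

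The missing input is that here ``cofibration sequence'' means $f_i$ is a cofibration, $C_i=B_i/A_i$ and $g_i$ is the quotient map; then $g_1$ is surjective and $c$ is completely determined by $b$. With that in hand the correct mechanism is not to homotope $c$ but to compare through the auxiliary space $\widetilde{C}A_i\cup_{A_i}B_i$ (note that in the quotient model there is no strict map $\delta_1:C_1\to\Sigma A_1$, so your square with $\delta_1$ does not literally exist): define the multivalued map $Ca\cup b:\widetilde{C}A_1\cup_{A_1}B_1\to\SP(\widetilde{C}A_2\cup_{A_2}B_2)$, which is well defined precisely because the left hypothesis square commutes strictly, and check that it strictly intertwines both collapse maps --- the homotopy equivalence $\iota$ onto $B_i/A_i$ (here one uses $c\circ g_1=\SP(g_2)\circ b$) and the map $k$ onto $\Sigma A_i$ (where it becomes $\Sigma a=\id_{S^1}\triangle a$). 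Then Proposition \ref{prop:fstarnaturality} applied to these two strict squares, together with Corollary \ref{cor:fstarwithsuspensions} for the suspension isomorphism, gives $c^*\circ\partial_2^*=\partial_1^*\circ a^*$, since $\partial_i^*=(\iota^*)^{-1}\circ k^*\circ s_{A_i}$. This is the paper's route; your write-up has all the ingredients but substitutes an unjustified (and in general false) homotopy-rectification of $c$ for the decisive facts that $c$ is induced by $b$ on the quotient and that the comparison map is built on the cone, not on $C_1$.
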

\begin{proof}
    The left two squares commute by Proposition \ref{prop:fstarnaturality}, so we focus on the rightmost square. Let us recall the construction of the connecting homomorphism $\partial_i^*$ using the reduced mapping cone $\widetilde{C}A_i$ of $A_i$. Since $f_i$ are cofibrations, we have $C_i=B_i/A_i$ and we have the sequence of maps \begin{tikzcd} B_i/A_i&\widetilde{C}A_i\cup_{A_i}B_i \lar[hook,"\sim"', "\iota"]\rar[two heads, "k"]& \Sigma A_i\end{tikzcd}
    where $\iota$ collapses the cone and is a homotopy equivalence, whereas $k$ collapses the subspace $B$. The connecting homomorphism $\partial_i^*$ is the composition $(\iota^*)^{-1}\circ k^*\circ s_{A_i}$. But all three composed maps commute with the induced cohomology maps by the naturality of this topological construction, by Proposition \ref{prop:fstarnaturality}, and by Corollary \ref{cor:fstarwithsuspensions}.
\end{proof}

\section{Scanning}\label{sec:scanningrevisited}

We recall the set-up from \cite{Tillmanthorpe2014} for the space-level model of the scanning isomorphism $s$. Let $(M,M_0)$ be a manifold submanifold pair, and $(X,x_0)$ a based space. The labelled configuration space $C(M,M_0;X)$ is defined as 
$$C(M,M_0;X)=\big\{\{(m_i,x_i)\}_{i\in I}: I \text{ finite set }, m_i\in M,x_i\in X, m_i\neq m_j \text{ if } i\neq j\big\}/\sim,$$
where $\sim$ is the relation generated by $\xi_{I}\sim \xi_I\cup \{(m,x)\}$ if $m\in M_0$ or $x=x_0$. We will denote a generic element of $C(M,M_0;X)$ by $\xi_I$, and given a subset $J\subset I$, we will write $\xi_J:=\{(m_i,x_i)\}_{i\in J}$. 

The space $C:=C(M,M_0;X)$ has a filtration $C_1\subset ... \subset C_k\subset C_{k+1}\subset ...\subset C$ where $C_k$ contains labelled configurations of at most $k$ points. The filtration quotients $D_k=C_k/C_{k-1}$ are naturally based at $\infty$, corresponding to when $(m,x)\in \xi_I$ with $m\in M_0$ or $x=x_0$. Define the wedge sum $V=\bigvee_{k\ge 1}D_k$. The \textit{scanning map} is
\begin{align*}
    \sigma: C&\longrightarrow \SP(V)\\
    \xi_I&\longmapsto \sum_{J\subseteq I} \xi_J.
\end{align*}
We define three further maps
    \begin{itemize}
        \item $\Delta:C\to C\wedge C$, the diagonal map $\xi\mapsto (\xi, \xi)$;
        \item $\Phi: V \to \SP(V\wedge V)$ mapping $\xi_I\mapsto \sum_{I=A\sqcup B}(\xi_A,\xi_B)$;
        \item $\Psi: V \to \SP(V\wedge V)$ mapping $\xi_I\mapsto \sum_{I=A\cup B}(\xi_A,\xi_B)$.
    \end{itemize}
By abuse of notation, $\Phi$ and $\Psi$ will also denote their natural extensions of the form $\SP(V)\rightarrow \SP(V\wedge V)$. We thus obtain a square of maps \begin{equation}\label{eq:diagdeltasigmaphipsi}
        \begin{tikzcd}
            C\rar["\sigma"] \dar["\Delta"] & \SP(V)\dar["\Phi", shift left =2.5pt]\dar["\Psi"', shift right=2.5pt]\\
            C\wedge C\rar["\sigma\triangle \sigma"] & \SP(V\wedge V),
        \end{tikzcd}
    \end{equation}
which should be thought of as two diagrams, one with $\Phi$ and one with $\Psi$.

\begin{proposition}\label{prop:sigmaisiso}
The cohomology map $\sigma^*:\widetilde{H}^*(V)\to \widetilde{H}^*(C)$ is an isomorphism.    
\end{proposition}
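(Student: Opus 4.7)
The plan is to prove the isomorphism by induction on the filtration degree, showing that each restriction $\sigma_k : C_k \to \SP(V_k)$, where $V_k := \bigvee_{j \leq k} D_j$, induces an isomorphism $\sigma_k^* : \widetilde{H}^*(V_k) \to \widetilde{H}^*(C_k)$ on reduced cohomology. The base case $k = 0$ is trivial since $C_0$ and $V_0$ are both the basepoint. The full statement will then follow from passing to the colimit in $k$.

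For the inductive step, I would apply Lemma \ref{lem:fstarandLES} to the cofibration sequences
\[
C_{k-1} \hookrightarrow C_k \twoheadrightarrow D_k \quad\text{and}\quad V_{k-1} \hookrightarrow V_k \twoheadrightarrow D_k,
\]
the second being a cofibration because $V_k = V_{k-1} \vee D_k$. The vertical maps are $\sigma_{k-1}$, $\sigma_k$ and the tautological inclusion $\iota_{D_k} : D_k \to \SP(D_k)$. The key verification is the commutativity of
\begin{center}
    \begin{tikzcd}
        C_{k-1} \rar \dar["\sigma_{k-1}"] & C_k \rar["q_k"] \dar["\sigma_k"] & D_k \dar["\iota_{D_k}"] \\
        \SP(V_{k-1}) \rar["\SP(j)"] & \SP(V_k) \rar["\SP(\pi)"] & \SP(D_k)
    \end{tikzcd}
\end{center}
where $j: V_{k-1} \hookrightarrow V_k$ is the wedge inclusion, $\pi: V_k \to D_k$ the wedge projection collapsing $V_{k-1}$, and $q_k$ the quotient $C_k \to C_k/C_{k-1}$. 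The left square is immediate, because for $\xi_I \in C_{k-1}$ every sub-configuration $\xi_J$ already lies in $V_{k-1}$. The right square is the heart of the argument: $\SP(\pi)(\sigma_k(\xi_I)) = \sum_{J \subseteq I} \pi(\xi_J)$ keeps only those summands with $|J| = k$ and $\xi_J$ non-degenerate, which forces $J = I$ and $\xi_I \in C_k \setminus C_{k-1}$, exactly reproducing $\iota_{D_k}(q_k(\xi_I))$.

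Given the diagram, Lemma \ref{lem:fstarandLES} produces a commutative ladder of long exact sequences in cohomology. Since $\iota_{D_k}^* = \id$ on $\widetilde{H}^*(D_k)$ and $\sigma_{k-1}^*$ is an isomorphism by the inductive hypothesis, the five-lemma shows that $\sigma_k^*$ is an isomorphism as well. To pass from the $C_k$ to $C$, I would use the Milnor $\varprojlim^1$ sequence together with the identification $\widetilde{H}^*(V) = \prod_k \widetilde{H}^*(D_k) = \varprojlim_k \widetilde{H}^*(V_k)$, noting that $\varprojlim^1 = 0$ because the restrictions $\widetilde{H}^*(C_k) \to \widetilde{H}^*(C_{k-1})$ correspond via the $\sigma_k^*$ to the split surjections $\widetilde{H}^*(V_k) \to \widetilde{H}^*(V_{k-1})$ coming from the wedge decomposition.

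The main obstacle is the verification of the right square: it is the combinatorial core of why scanning is a cohomology isomorphism, namely that the sum $\sum_{J \subseteq I} \xi_J$ decouples cleanly along the filtration so that only the top-weight summand survives projection to the quotient $D_k$. A secondary technicality is the passage to the colimit, handled by the Mittag-Leffler surjectivity above.
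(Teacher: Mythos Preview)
Your proof is correct and follows essentially the same route as the paper's: both argue by induction on the filtration via the map of cofibration sequences $C_{k-1}\hookrightarrow C_k\twoheadrightarrow D_k$ and $V_{k-1}\hookrightarrow V_k\twoheadrightarrow D_k$, invoke Lemma~\ref{lem:fstarandLES} and the five-lemma, and then pass to the colimit. Your verification of the right square and your Mittag--Leffler treatment of the colimit are in fact more explicit than the paper's, which simply asserts the commutativity and appeals to ``naturality of cohomology maps'' for the limit step.
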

\begin{proof}
    This is analogous to the proof of Theorem 4.1 from \cite{Tillmanthorpe2014}. Write $V_k=\vee_{i=1}^kD_i$ and observe that $\sigma$ restricts to a multivalued function of cofibrations from $C_k\to C_{k+1}\to C_{k+1}/C_k$ to $V_k\to V_{k+1}\to V_{k+1}/V_k$, in both of which the third space is $D_{k+1}$, of the form
    \begin{equation*}
    \begin{tikzcd}
    C_k\rar[hook]\dar["\sigma_k"]&
    C_{k+1}\rar[two heads]\dar["\sigma_{k+1}"] & D_{k+1}\dar[hook,"\iota_{D_{k+1}}"]\\
            \SP(V_k)\rar[hook]& \SP(V_{k+1})\rar[two heads]& \SP(D_{k+1})
        \end{tikzcd}
    \end{equation*} 
    The map $\sigma_1^*$ is the identity map. We induct using the map of long exact sequences
    \begin{equation*}
        \begin{tikzcd}
            ...\rar["\partial^*"]& \widetilde{H}^*(D_k)\rar\dar[equal] & \widetilde{H}^*(V_{k+1})\rar\dar["\sigma_{k+1}^*"] &\widetilde{H}^*(V_k)\rar["\partial^*"]\dar["\sigma_k^*"] & ...\\
            ...\rar["\partial^*"]& \widetilde{H}^*(D_k)\rar & \widetilde{H}^*(C_{k+1})\rar &\widetilde{H}^*(C_k)\rar["\partial^*"] & ...
        \end{tikzcd}
    \end{equation*}
     from Lemma \ref{lem:fstarandLES}, and the 5-lemma, to conclude that $\sigma_k^*$ is an isomorphism for all $k\ge 1$. As $C$ and $V$ are the colimits of the filtrations $\{C_k\}$ and $\{V_k\}$, $\sigma^*$ is an isomorphism by the naturality of cohomology maps.
\end{proof}

\subsection{Commutativity of diagram \ref{eq:diagdeltasigmaphipsi}}
\begin{proposition} The diagram \ref{eq:diagdeltasigmaphipsi} with $\Psi$ commutes, that is $\Psi\circ \sigma=(\sigma\triangle \sigma) \circ \Delta$.
\end{proposition}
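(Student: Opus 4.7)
The plan is a direct pointwise computation: both maps send an element $\xi_I \in C$ to an element of $\SP(V \wedge V)$, and I will show the resulting formal sums agree term by term via a combinatorial bijection.

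First I would evaluate the left-hand side. Applying $\sigma$ gives $\sigma(\xi_I) = \sum_{J \subseteq I} \xi_J \in \SP(V)$. Since $\Psi$ is extended to $\SP(V) \to \SP(V \wedge V)$ as a homomorphism of abelian monoids, I get
\begin{equation*}
\Psi(\sigma(\xi_I)) \;=\; \sum_{J \subseteq I} \Psi(\xi_J) \;=\; \sum_{J \subseteq I} \; \sum_{J = A \cup B} (\xi_A, \xi_B),
\end{equation*}
where the inner sum runs over all (not necessarily disjoint) unions $J = A \cup B$.

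Next I would evaluate the right-hand side. By definition, $\Delta(\xi_I) = (\xi_I, \xi_I) \in C \wedge C$, and then the formula for $\sigma \triangle \sigma$ yields
\begin{equation*}
(\sigma \triangle \sigma)(\xi_I, \xi_I) \;=\; \sum_{J \subseteq I,\, K \subseteq I} (\xi_J, \xi_K).
\end{equation*}

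The key step is to match the two sums by a bijection between indexing sets. To each pair of subsets $(J, K)$ of $I$, associate the triple $(L, A, B) := (J \cup K, J, K)$; this is a valid datum for the left-hand side since $L = A \cup B$ and $L \subseteq I$. The inverse map forgets $L$. Under this bijection the term $(\xi_J, \xi_K)$ on the right corresponds to the term $(\xi_A, \xi_B)$ on the left, so the two formal sums in $\SP(V \wedge V)$ coincide. This gives the pointwise equality of maps $C \to \SP(V \wedge V)$, hence the commutativity of the square with $\Psi$.

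I do not anticipate a real obstacle here: the statement is essentially combinatorial, the only subtlety being to notice that in the definition of $\Psi$ the decomposition $I = A \cup B$ is not disjoint, which is exactly what makes the bijection with arbitrary pairs $(J, K)$ work (whereas $\Phi$, with disjoint unions, would correspond instead to pairs with $J \cap K = \emptyset$ and give a different identity). Continuity of both sides is built into the constructions of $\sigma$, $\Delta$, $\Psi$ and $\sigma \triangle \sigma$, so no further topological argument is needed.
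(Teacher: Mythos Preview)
Your proof is correct and follows exactly the paper's approach: both sides are computed pointwise on $\xi_I$, and the identification of the resulting formal sums amounts to observing that in the double sum $\sum_{J\subseteq I}\sum_{A\cup B=J}(\xi_A,\xi_B)$ the index $J$ is determined by $(A,B)$ and can be dropped, yielding $\sum_{A,B\subseteq I}(\xi_A,\xi_B)$. The paper states this bijection in one line; your version spells it out more explicitly but is otherwise identical.
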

\begin{proof}
    By direct computations 
    $$(\Psi\circ \sigma)(\xi_I)=\Psi\Big(\sum_{J\subseteq I} \xi_J\Big)=\sum_{A,B,J\subseteq I:A\cup B=J} (\xi_A,\xi_B)$$
    and 
    $$(\sigma\triangle \sigma) \circ \Delta(\xi_I)=(\sigma\triangle \sigma)(\xi_I,\xi_I)=\sum_{A,B\subseteq I}(\xi_A,\xi_B).$$
    The two summations agree: in the first summation $J$ is fully determined by $A$ and $B$ and can be ignored.
\end{proof}

The remaining of this section deals with the technicalities of proving the following.
\begin{proposition}\label{prop:phiandpsihomotopic}
    If $M_0=\emptyset$ and $X=S^{2l}$ is an even sphere, the maps $\Phi$ and $\Psi$ are based homotopic.
\end{proposition}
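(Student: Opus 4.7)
The plan is to construct a based homotopy between $\Phi$ and $\Psi$ by reorganizing their difference into pieces that factor through reduced diagonals on smash powers of $X$, and then invoking the null-homotopy of these diagonals that is available for even-dimensional spheres.

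First, I would reindex the sum defining $\Psi$. Every decomposition $I = A \cup B$ determines a unique triple $(A', B', C) = (A \setminus B, B \setminus A, A \cap B)$, giving a disjoint decomposition $I = A' \sqcup B' \sqcup C$; conversely, every such triple yields $A = A' \cup C$, $B = B' \cup C$. The $C = \emptyset$ terms recover $\Phi(\xi_I)$, so the difference reads
\[
\Psi(\xi_I) - \Phi(\xi_I) = \sum_{\substack{I = A' \sqcup B' \sqcup C \\ C \neq \emptyset}} (\xi_{A' \cup C}, \xi_{B' \cup C}),
\]
and each extra term has the shared subconfiguration $\xi_C$ appearing in both coordinates of the pair.

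Next, I would observe that each extra term factors through a reduced diagonal on the $X$-labels of the shared configuration. Specifically, the contribution of decompositions with fixed $|C| = n$ comes from a map whose ``$X$-label part'' contains as a factor the reduced diagonal $\widetilde{\Delta}_n : X^{\wedge n} \to X^{\wedge n} \wedge X^{\wedge n}$. For $X = S^{2l}$ we have $X^{\wedge n} \cong S^{2ln}$, so $\widetilde{\Delta}_n$ is a based map $S^{2ln} \to S^{4ln}$ representing a class in $\pi_{2ln}(S^{4ln}) = 0$; it therefore admits a pointed null-homotopy, which moreover can be chosen $\mathfrak{S}_n$-equivariantly for the symmetric group permuting the shared labels, because the swap $X \wedge X \to X \wedge X$ has degree $(-1)^{(2l)^2} = +1$. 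This is where the even-sphere hypothesis enters.

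These null-homotopies assemble into a global based homotopy by induction on the filtration $V_k = \bigvee_{j \leq k} D_j$: at each stage the cofibration $V_{k-1} \hookrightarrow V_k \twoheadrightarrow D_k$ permits extending a homotopy on $V_{k-1}$, and the new contributions on $D_k$ (which involve shared subsets of sizes $n \leq k$) are killed using the null-homotopies of $\widetilde{\Delta}_n$ from the previous step. The main obstacle is this last assembly: organizing the null-homotopies of individual label-diagonals into a single based homotopy that is continuous in the configuration data and equivariant for the symmetric group actions defining the quotients $D_k$. The even-sphere hypothesis is used essentially throughout this bookkeeping, since for an odd sphere the swap on $X \wedge X$ would introduce a minus sign obstructing an equivariant null-homotopy, and the extra terms would genuinely contribute.
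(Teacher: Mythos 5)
Your first step --- reindexing $\Psi$ over triples $(A',B',C)$ with $C=A\cap B$, so that the terms with $C\neq\emptyset$ are exactly the discrepancy with $\Phi$ --- is the same decomposition the paper uses (componentwise on $D_p\to \SP(D_q\wedge D_r)$, factoring the extra terms through the coloured space $D_{C,A',B'}$). But the heart of the matter is precisely what you defer to at the end: producing a \emph{single based homotopy}, continuous in the configuration and compatible with the symmetric-group quotients and with the basepoint identifications, that kills the map $(\xi_C,\xi_{A'},\xi_{B'})\mapsto(\xi_{C\cup A'},\xi_{C\cup B'})$. Your proposal reduces this to a null-homotopy of the reduced diagonal $\widetilde{\Delta}_n:S^{2ln}\to S^{4ln}$ ``on the label part'' and then says the assembly is the main obstacle; naming the obstacle is not overcoming it. The reduction itself is not well defined as stated: $D_{C,A',B'}$ is not a product of a configuration space with a smash power of $X$, so there is no map that ``contains $\widetilde{\Delta}_n$ as a factor''; the labels of the shared points form a vector-bundle direction that is twisted over the configuration space (the points of $C$ are permuted as they move in $M$), so what is needed is a \emph{parametrized} (fibrewise) null-homotopy over $C_{C,A',B'}$, and neither $\pi_{2ln}(S^{4ln})=0$ nor a $\mathfrak{S}_n$-equivariant null-homotopy of a single fibre map (which you assert from the degree of the swap but do not construct) yields that. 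The induction over the filtration $V_k$ does not help either: $V$ is a wedge and both maps split componentwise, so no extension problem along the cofibrations arises --- the difficulty lives entirely inside each component and is untouched by that step.

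For comparison, the paper closes exactly this gap by a global geometric argument: it identifies each $D_{\mathcal{I}}$ with the Thom space of the bundle with fibre $\R^{2l}\otimes\R\langle\sqcup_i I_i\rangle$ over $C_{\mathcal{I}}$, observes that the problematic map is the Thomification of a fibrewise-injective bundle map $\widetilde{f}$, and exhibits an explicit nowhere-vanishing section of the target (built from the $\Sym$-invariant vectors $v_{A\cup B}$, $-v_{A\cup C}$ and a fixed $v\in\R^{2l}$) that avoids the image of $\widetilde{f}$ on every fibre. This lets $\widetilde{f}$ extend over the rank-one stabilization, so the Thomified map factors through the inclusion $\Th(E)\to\Th(E\oplus\underline{\R})\cong\Sigma\Th(E)$, which is null-homotopic --- a construction that is automatically continuous and equivariant because it is done at the level of vector bundles, which is the ingredient your sketch lacks. (Incidentally, your claim that evenness of the sphere is what makes the equivariant bookkeeping work does not reflect where the hypothesis actually bites: the null-homotopy lemma only needs $l>0$; evenness is needed for orientability of these bundles and the Thom isomorphisms used elsewhere in the argument.)
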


For an integer $k\ge 1$, write $[k]=\{1,...,k\}$. Let  $\mathcal{I}=\{I_i: i\in [k]\}$ be a sequence of finite sets. The \textit{coloured configuration space on $M$ of type $\mathcal{I}$} is the space $$C_{\mathcal{I}}(M):=F_{\sqcup_{i\in [k]}I_i}(M)/\prod_{i\in [k]}\Sym(I_i),$$
where $F_S(M)$ is the ordered configuration on $M$ of points labeled by $S$, and the product of symmetric groups acts $\prod_{i\in [k]}\Sym(I_i)$ acts by permuting the corresponding co-ordinates. We may make a similar definition $$D_{\mathcal{I}}(M,X)=F_{\sqcup_{i\in [k]}I_i}(M)\times_{\prod_{i\in [k]}\Sym(I_i)} X^{\sqcup_{i\in [k]}I_i}/\sim,$$ where $\sim$ collapses to one point, called $\infty$, all configurations with some $x_i=x_0$.  
\begin{lemma}\label{lem:configurationmapnullhomotopic}
    Let $l>0$, and $A,B,C$ be three disjoint sets. If $A$ is non-empty, then the map 
    \begin{align*}
        \mu:D_{A, B, C}(M,S^{2l})&\longrightarrow D_{A\cup B}(M,S^{2l})\wedge D_{A\cup C}(M,S^{2l}),\\
        (\xi_A,\xi_B, \xi_C)&\longmapsto (\xi_A\cup \xi_B, \xi_A\cup \xi_C)
    \end{align*} is based-null-homotopic.
\end{lemma}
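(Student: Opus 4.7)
The map $\mu$ duplicates each $A$-label: the value $x_a \in S^{2l}$ of each point $a \in A$ appears in both factors of the target smash product. This duplication is encoded by the reduced diagonal $\Delta \colon S^{2l} \to S^{2l} \wedge S^{2l} \cong S^{4l}$, which represents an element of $\pi_{2l}(S^{4l}) = 0$ (using $l > 0$) and is therefore based-null-homotopic. Non-emptiness of $A$ is exactly what ensures at least one such duplication actually occurs and thus lets us exploit the null-homotopy.

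Concretely, the plan is to introduce the auxiliary space
$$\widetilde{D} := F_{A \sqcup B \sqcup C}(M)_+ \wedge_G \bigl((S^{2l} \wedge S^{2l})^{\wedge A} \wedge (S^{2l})^{\wedge B} \wedge (S^{2l})^{\wedge C}\bigr),$$
with $G := \Sym(A) \times \Sym(B) \times \Sym(C)$, in which the $A$-labels take values in $S^{2l} \wedge S^{2l}$ rather than $S^{2l}$. I then construct a based map
$$\widetilde{\kappa} \colon \widetilde{D} \longrightarrow D_{A \cup B}(M, S^{2l}) \wedge D_{A \cup C}(M, S^{2l})$$
which pairs the $B$-labels with the first coordinate of each duplicated $A$-label for the first smash factor, and the $C$-labels with the second coordinate for the second. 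With these in hand, $\mu$ factors as $\widetilde{\kappa} \circ \Delta_A$, where $\Delta_A$ applies the reduced diagonal $\Delta$ slot-wise to each $A$-label.

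It therefore suffices to null-homotope $\Delta_A$. Pick a based null-homotopy $h \colon S^{2l} \wedge I_+ \to S^{2l} \wedge S^{2l}$ of $\Delta$, and apply $h$ in parallel (using the same parameter $t$) to all $A$-slots. The resulting map $(S^{2l})^{\wedge A} \wedge I_+ \to (S^{2l} \wedge S^{2l})^{\wedge A}$ is automatically $\Sym(A)$-equivariant, since the same $h$ is used in every slot and this action merely permutes the smash factors. Extending trivially over the $B$ and $C$ slots and then smashing with $F_{A \sqcup B \sqcup C}(M)_+$ over $G$ (which acts trivially on the parameter $I_+$) produces a based null-homotopy of $\Delta_A$; composing with $\widetilde{\kappa}$ yields the claimed null-homotopy of $\mu$.

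The main obstacle I anticipate is the somewhat fiddly verification that $\widetilde{\kappa}$ is well-defined and based: whenever a duplicated $A$-label $(y_1, y_2) \in S^{2l} \wedge S^{2l}$ has either coordinate equal to the basepoint $x_0$, the corresponding factor of the target smash product must collapse to $\infty$, so that the pair becomes basepoint. Once this is dispatched, the rest is essentially formal manipulation of smash products and the Borel-type construction defining $D$, together with the vanishing $\pi_{2l}(S^{4l}) = 0$, which is the only non-formal input.
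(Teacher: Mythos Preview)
Your argument is correct and gives a more directly homotopy-theoretic route than the paper's. The paper instead identifies each $D_{\mathcal{I}}(M,S^{2l})$ with the Thom space of a vector bundle $\eta_{\mathcal{I}}$ over the coloured configuration space $C_{\mathcal{I}}(M)$ and realises $\mu$ as the Thomification of a bundle map $\widetilde{f}$ that is injective on fibres. It then exhibits a $G$-invariant nowhere-vanishing section of the target bundle lying outside the image of $\widetilde{f}$---essentially the vector $(v_A,-v_A)$ in the doubled $\R\langle A\rangle$-summand, tensored with some nonzero $v\in\R^{2l}$---so that $\mu$ factors through $\Th(\eta_{A,B,C}\oplus\underline{\R})\cong\Sigma\,\Th(\eta_{A,B,C})$ via the canonical inclusion of a space into its reduced suspension, which is null-homotopic. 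The two proofs are close in spirit: producing a complementary line to the diagonal $\R^{2l}\hookrightarrow\R^{2l}\oplus\R^{2l}$ is exactly what underlies the null-homotopy of the reduced diagonal $S^{2l}\to S^{4l}$ that you invoke, and in both cases $A\neq\emptyset$ is what makes the relevant vector (respectively, the relevant smash factor) nontrivial. Your approach avoids the Thom-space formalism and the bundle bookkeeping, needing only $\pi_{2l}(S^{4l})=0$ and a check that the slot-wise null-homotopy is $\Sym(A)$-equivariant; the paper's approach is more geometric and makes the ``extra direction'' explicit as a section.
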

\begin{proof}
   For brevity, we write $D_{\mathcal{I}}=D_{\mathcal{I}}(M,S^{2l})$. As observed in \cite{BodigheimerRationalCohoConfSpacesSurfaces}, $D_{\mathcal{I}}$ given any type $\mathcal{I}$,   is homeomorphic to the Thom space of the vector bundle
    \begin{equation*}
         \eta_{\mathcal{I}}:E_{\mathcal{I}}:= F_{\mathcal{I}}\times_{\prod_{i\in [k]}\Sym(I_i)}\R^{2l}\otimes \left(\R\langle \sqcup_{i\in [k]}I_i\rangle\right) \longrightarrow
            C_{\mathcal{I}}. 
    \end{equation*}
    Under these homeomorphisms, the map $\mu$ is identified with the Thomification, $\Th(\widetilde{f})$, of the bundle morphism
    \begin{center}
        \begin{tikzcd}
            E_{A, B, C}\dar["\eta_{A,B,C}"]\rar["\widetilde{f}"] &
            E_{A\sqcup B }\times E_{A \sqcup C}\dar["\eta_{A\sqcup B}\times \eta_{A\sqcup C}"] 
            \\
            C_{A,B,C}\rar["f"] & C_{A\sqcup B}\times C_{A\sqcup C},
        \end{tikzcd}
    \end{center}
    which on $F_{\mathcal{I}}\times \R^{2l}\otimes \R\langle \sqcup_{i\in [k]}I_i\rangle$ is defined as the product of the map
    $$(x_A,x_B,x_C)\longmapsto (x_A \cup x_B,x_A \cup x_C),$$
    the identity on $\R^{2l}$ and the map
    \begin{align*}
        \R\langle A\sqcup B \sqcup C\rangle&\longrightarrow \R\langle A\sqcup B\rangle\oplus \R\langle A\sqcup C\rangle\\
        (a\oplus b\oplus c)&\longmapsto  \big((a\oplus b)\oplus (a\oplus c)\big),
    \end{align*}
    and descends after quotienting by the diagonal action. 
    Observe that $\widetilde{f}$ is an injective linear map on the fibres, which is a necessary condition for a morphism of vector bundles to give a map of Thom spaces. 

   We now factor $(f,\widetilde{f})$ as the composition of two bundle maps both injective on the fibres. First, we include $\eta_{A,B,C}$ into its stabilisation $\eta_{A,B,C}\oplus \underline{\R}$. Secondly, observe that for any finite set $I$, the vector space $\R\langle I\rangle$ contains the $\Sym(I)$-invariant vector $v_I:=\sum_{i\in I}i$. By also picking an arbitrary non-zero vector $v\in \R^{2l}$, we obtain the nowhere vanishing section $s$ of $\eta_{A\cup B}\times \eta_{A\cup C}$ given by $p\longmapsto (p,v\otimes (v_{A\cup B}\oplus (-v_{A\cup C})))$. Furthermore, on every fibre, $s$ does not lie in the image of $\widetilde{f}$; this can be checked on the direct summand $\R\langle A\rangle$ of each of $\R\langle A\sqcup B\rangle$ and $\R\langle A\sqcup C\rangle$: $\widetilde{f}$ takes the same values in both copies $\R\langle A\rangle$, whereas $s$ takes the same values with opposite signs. In summary, we have obtained the diagram of bundle maps
    \begin{center}
        \begin{tikzcd}
            E_{A,B,C}\rar[hook] \dar & E_{A,B,C}\oplus \underline{\R}\dar \rar["\widetilde{f}\oplus s"] & E_{A\cup B}\times E_{A\cup C}\dar \\
            C_{A,B,C}\rar[equal] & C_{A,B,C}\rar["f"] & C_{A\cup B}\times C_{A\cup C}\uar[bend left, "s"]
        \end{tikzcd}
    \end{center}
    where both squares are injective on fibres. We conclude that the map $\mu$ factors through
    \begin{equation*}
        \Th(E_{A,B,C})\to \Th(E_{A,B,C}\oplus \underline{\R})\cong \Th(E_{A,B,C})\wedge S^1\to \Th( E_{A\sqcup B })\wedge\Th(E_{A \sqcup C}).
    \end{equation*}
    The canonical inclusion of $\Th(E_{A,B,C})$ in its reduced suspension is based-null-homotopic, and thus so is $\mu$.
\end{proof}

\begin{proof}[Proof of Proposition \ref{prop:phiandpsihomotopic}]
    By the definition $V=\vee_{p\ge 1}D_p$, we may write the smash $V\wedge V$ as the wedge $\vee_{q,r\ge 1}D_q\wedge D_r$, so that $\SP(V\wedge V)=\wedge_{q,r\ge 1}(D_q\wedge D_r)$. As a result, the maps $\Phi,\Psi:V\to \SP(V\wedge V)$ decompose to $(p,q,r)$-components  $$\Phi^{(p,q,r)},\Psi^{(p,q,r)}: D_p\longrightarrow \SP(D_q\wedge D_r).$$
    We will prove that these component maps are pairwise base-homotopic.
    
    To begin with, the component $\Phi^{(p,q,r)}$ is given explicitly by  $$\Phi^{(p,q,r)}(\xi_I)=\sum_{\substack{A,B:|A|=q,\\|B|=r,\\ A\sqcup B=I}}(\xi_A,\xi_B),$$ where $|I|=p$. The summation clearly vanishes if $p\neq q+r$. The component $\Psi^{(p,q,r)}$ has the analogous formula 
    $$\Psi^{(p,q,r)}(\xi_I)=\sum_{\substack{A,B:|A|=q,\\|B|=r, \\A\cup B=I}}(\xi_A,\xi_B),$$ which  only differs from $\Phi^{(p,q,r)}$ in that the union $A\cup B$ is no longer disjoint. This too clearly vanishes if $q+r<p$ and, for $p=q+r$, we have $\Phi^{(p,q,r)}=\Psi^{(p,q,r)}$. Thus for all $p\le q+r$, the components $\Phi^{(p,q,r)}$ and $\Psi^{(p,q,r)}$ are in fact equal on the nose. 
    
    For $p>q+r$, $\Psi^{(p,q,r)}$ factors as the composition
    \begin{equation*}
        D_p\longrightarrow \SP(D_{q+r-p, p-r,p-q})\longrightarrow \SP(D_q\wedge D_r)
    \end{equation*}
    by first partitioning the set $I$ of size $p$ into three disjoint sets $C,A',B'$ of sizes $q+r-p, p-r, p-q$, respectively, and then mapping $\xi_{C,A',B'}$ to $\xi_{C\cup A'}\wedge \xi_{C\cup B'}$. This latter map is $\mu$ from Lemma \ref{lem:configurationmapnullhomotopic} and is based-null homotopic. Thus $\Psi^{(p,q,r)}$ is based-homotopic to the trivial map which is equal to $\Phi^{(p,q,r)}$.
\end{proof}

\begin{corollary}\label{cor:DeltacommuteswithPhi}
The diagram
    \begin{center}
        \begin{tikzcd}
            
            \widetilde{H}^*(V\wedge V) \rar["(\sigma\triangle \sigma)^*"]\dar["\Delta^*"] & \widetilde{H}^*(C\wedge C)\dar["\Phi^*"]\\
            \widetilde{H}^*(V)\rar["\sigma^*"] & \widetilde{H}^*(C).
        \end{tikzcd}
    \end{center}
    commutes.
\end{corollary}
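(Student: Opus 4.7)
The plan is to combine two inputs: the strict equality $\Psi \circ \sigma = (\sigma \triangle \sigma)\circ \Delta$ established in the preceding proposition, and the based homotopy $\Phi \simeq \Psi$ of Proposition \ref{prop:phiandpsihomotopic}.

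First, I would upgrade $\Psi \circ \sigma = (\sigma \triangle \sigma)\circ \Delta$ to the cohomological identity $\sigma^* \circ \Psi^* = \Delta^* \circ (\sigma\triangle \sigma)^*$. This requires the functoriality of $(-)^*$ under composition of multivalued functions: for based multivalued $f\colon X \to \SP(Y)$ and $g\colon Y \to \SP(Z)$, one has $(\widetilde{g}\circ f)^* = f^* \circ g^*$. This is immediate from the uniqueness of homomorphic extensions --- given a representative $\alpha\colon Z \to \SP(S^n)$, both $\reallywidetilde{\widetilde{\alpha}\circ g}$ and $\widetilde{\alpha}\circ \widetilde{g}$ are continuous homomorphisms $\SP(Y) \to \SP(S^n)$ restricting to $\widetilde{\alpha}\circ g$ on the generating set $Y$ --- which is precisely the bookkeeping that appears in the proof of Proposition \ref{prop:fstarnaturality}. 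Applied to both sides of $\Psi \circ \sigma = (\sigma \triangle \sigma)\circ \Delta$, and noting that for the single-valued $\Delta$ the induced $\Delta^*$ agrees with the ordinary pullback, this yields the $\Psi^*$-version of the target square.

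Second, I would use Proposition \ref{prop:phiandpsihomotopic} to replace $\Psi^*$ by $\Phi^*$: composing a based homotopy $H\colon \Phi \simeq \Psi$ with the continuous homomorphism $\widetilde{\alpha}$ attached to any representative $\alpha\colon V\wedge V \to \SP(S^n)$ produces a based homotopy $\widetilde{\alpha}\circ \Phi \simeq \widetilde{\alpha}\circ \Psi$, whence $\Phi^* = \Psi^*$ as maps $\widetilde{H}^*(V\wedge V) \to \widetilde{H}^*(V)$. Combining the two steps then yields $\sigma^* \circ \Phi^* = \Delta^* \circ (\sigma\triangle \sigma)^*$.

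I do not anticipate any real obstacle. The geometric substance, namely the null-homotopy of a certain Thomified bundle map, is already contained in Lemma \ref{lem:configurationmapnullhomotopic} and Proposition \ref{prop:phiandpsihomotopic}; the remainder is the formal functoriality and based-homotopy-invariance of the $(-)^*$ construction set up in Section \ref{sec:symmetricproductsetc}.
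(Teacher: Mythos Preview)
Your proposal is correct and matches the paper's intent: the corollary is stated without proof precisely because it follows immediately from combining the strict commutativity of diagram~\eqref{eq:diagdeltasigmaphipsi} with $\Psi$ and the based homotopy $\Phi\simeq\Psi$ of Proposition~\ref{prop:phiandpsihomotopic}, together with the formal functoriality and homotopy-invariance of the $(-)^*$ construction from Section~\ref{sec:symmetricproductsetc}. Your write-up spells out exactly these routine verifications, and the identity you arrive at, $\sigma^*\circ\Phi^*=\Delta^*\circ(\sigma\triangle\sigma)^*$, is the intended one (indeed, note that in the diagram as printed the labels $\Delta^*$ and $\Phi^*$ are interchanged---$\Phi^*$ lives on the $V$-side and $\Delta^*$ on the $C$-side---so you have correctly read through that typo).
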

    
\section{The superposition product and its relation with $\Phi$}\label{sec:superpositionprodefinition}
Let $C_{n,m}(M)$ be the coloured configuration space of type $\{[n],[m]\}$, 
and $D_{n,m}:=D_{n,m}(M,S^{2d})$ its labelled version. 
The space $C_{n,m}(M)$ is both an open subset of the product $C_n\times C_m$, and a degree
${{n+m} \choose n}$ 
covering map of $C_{n+m}$, giving the maps
\begin{center}
    \begin{tikzcd}
        C_n\times C_m & C_{n,m}\lar[hook,"i"']\rar["p"] & C_{n+m}
    \end{tikzcd}
\end{center}
Using the cohomology transfer map $p_!$, we obtain the \textit{superposition product} $${\sup}_{n,m}=p_{!}\circ i^{*}: H^*(C_n\times C_m)\to H^*(C_{n+m}).$$
We will suppress the index of $\sup$.

We recall that $D_n$ is a Thom space over $C_n$ and $D_n\wedge D_m$ is also a Thom space over $C_n\times C_m$. Further recall that the map $\Phi$ restricts to a multivalued function $\Phi^{(n+m,n,m)}:D_{n+m}\to \SP(D_n\wedge D_m)$. We will henceforth suppress the superscript of $\Phi$.
\begin{proposition}\label{Prop:supcommuteswithPhi}
    The diagram 
    \begin{center}
        \begin{tikzcd}
            H^*(C_n\times C_m)\rar["\sup"]\dar["Th","\cong"']& H^*(C_{n+m})\dar["Th","\cong"'] \\
            \widetilde{H}^*(D_n\wedge D_m)\rar["\Phi^*"] & \widetilde{H}^*(D_{n+m}),
        \end{tikzcd}
    \end{center}
    commutes. (The degree shifts of the Thom isomorphisms are kept implicit.)
\end{proposition}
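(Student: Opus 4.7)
The plan is to factor the multivalued function $\Phi: D_{n+m}\to \SP(D_n\wedge D_m)$ through the labelled coloured configuration space $D_{n,m}$. Both maps $p$ and $i$ appearing in the definition of $\sup$ lift to the Thom level: the covering $p$ Thomifies to a map $\widetilde{p}: D_{n,m}\to D_{n+m}$ which is itself a $\binom{n+m}{n}$-sheeted covering away from the basepoint, and the open inclusion $i$ Thomifies to a based map $\widetilde{i}: D_{n,m}\to D_n\wedge D_m$. These are well defined since $\eta_{n+m}$ pulls back along $p$ to $\eta_{n,m}$, and since $\eta_{n,m}$ coincides with the restriction of the external Whitney sum $\eta_n\boxtimes \eta_m$ to the open subspace $C_{n,m}\subset C_n\times C_m$. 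Inverting $\widetilde{p}$ as in Example \ref{rem:coveringsandtransfers} yields a based multivalued function $\widetilde{p}^{-1}: D_{n+m}\to \SP(D_{n,m})$.

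The first step is to verify the set-theoretic identity
\[
\Phi \;=\; \SP(\widetilde{i})\circ \widetilde{p}^{-1}.
\]
This unwinds directly from the definition of $\Phi$: the map $\widetilde{p}^{-1}$ records the $\binom{n+m}{n}$ colourings of a labelled configuration $\xi_I\in D_{n+m}$ as a formal sum of coloured configurations in $D_{n,m}$, and $\SP(\widetilde{i})$ separates each summand into its blue and red parts in $D_n\wedge D_m$.

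The second step translates this to cohomology. Applying Proposition \ref{prop:fstarnaturality} with $g=\id$ and $h=\widetilde{i}$ gives $\Phi^*=(\widetilde{p}^{-1})^*\circ \widetilde{i}^*$, and Example \ref{rem:coveringsandtransfers} identifies $(\widetilde{p}^{-1})^*=\widetilde{p}_!$. The proposition then reduces to the commutativity of the two squares
\[
\begin{tikzcd}
H^*(C_n\times C_m) \rar["i^*"]\dar["Th","\cong"'] & H^*(C_{n,m}) \dar["Th","\cong"']\rar["p_!"] & H^*(C_{n+m}) \dar["Th","\cong"'] \\
\widetilde{H}^*(D_n\wedge D_m) \rar["\widetilde{i}^*"] & \widetilde{H}^*(D_{n,m}) \rar["\widetilde{p}_!"] & \widetilde{H}^*(D_{n+m}).
\end{tikzcd}
\]
The left square is the naturality of the Thom isomorphism along the bundle map $\widetilde{i}$. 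The right square is the projection formula for the transfer, applied to the Thom class identity $u_{n,m}=\widetilde{p}^*u_{n+m}$: for any $\alpha\in H^*(C_{n,m})$,
\[
\widetilde{p}_!(Th(\alpha))=\widetilde{p}_!(\alpha\cdot \widetilde{p}^*u_{n+m})=\widetilde{p}_!(\alpha)\cdot u_{n+m}=Th(p_!(\alpha)).
\]

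The only genuine piece of work is the factorisation $\Phi=\SP(\widetilde{i})\circ\widetilde{p}^{-1}$; thereafter, the two squares are instances of standard Thom and transfer naturality. The main care is in setting up the Thom-level maps carefully, so that $\widetilde{p}$ is recognised as a covering away from the basepoint (so that Example \ref{rem:coveringsandtransfers} applies) and $\widetilde{i}$ is recognised as a bundle map (so that Thom naturality applies).
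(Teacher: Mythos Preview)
Your proposal is correct and follows essentially the same route as the paper: factor $\Phi^{(n+m,n,m)}$ through $D_{n,m}$ as $\SP(\widetilde{i})\circ\widetilde{p}^{-1}$, then reduce to the commutativity of the two Thom squares for $i$ and $p$. The only cosmetic difference is that the paper isolates the right-hand square (Thom/transfer compatibility) as a separate lemma proved by an explicit singular-cochain computation with the disk--sphere model, whereas you invoke the projection formula $\widetilde{p}_!(-\smile\widetilde{p}^*u_{n+m})=\widetilde{p}_!(-)\smile u_{n+m}$ together with $\widetilde{p}_!\circ\widetilde{\pi}^*=\pi^*\circ p_!$; both are standard and amount to the same thing.
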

\begin{proof}
     Observe that $D_{n,m}$ is also the Thom space of an oriented vector bundle over $C_{n,m}$. Defining the maps $\widetilde{i}$ and $\widetilde{p}$ in the obvious way, we get two pullback squares of bundles
     \begin{center}
    \begin{tikzcd}
    D_n\wedge D_m-\{\infty\}\dar & D_{n,m}-\{\infty\} \lar[hook,"\widetilde{i}"]\rar["\widetilde{p}"]\dar & D_{n+m}-\{\infty\}\dar \\
        C_n\times C_m & C_{n,m}\lar[hook,"i"]\rar["p"] & C_{n+m}.
    \end{tikzcd}
\end{center}
Therefore, the Thom isomorphisms commute with the compactified maps $\widetilde{i}^*_{\infty}$
and $\widetilde{p}^*_{\infty}$. Moreover, from Lemma \ref{lem:thomandtransfer} proven below, as $p$ is a covering space, there are transfer maps $p_!$ and $(\widetilde{p}_{\infty})_!:\widetilde{H}^*(D_{n,m})\to \widetilde{H}^*(D_{n+m})$, which also commute with the Thom isomorphisms.

Now ${p}_!\circ {i}^*$ is by definition the superposition product. But also $(\widetilde{p}_{\infty})_!\circ \widetilde{i}^*_{\infty}$ equals $\Phi^*$, because $\Phi^{(n+m,n,m)}$ factors as the composition $D_{n+m}\to \SP(D_{n,m})\to\SP( D_n\wedge D_m)$, where the first map is the inverse of the covering map $\widetilde{p}_{\infty}$ and the second map is $\SP(\widetilde{i}_{\infty})$. In cohomology, we obtain the equality $(\widetilde{p}_{\infty})_!\circ \widetilde{i}^*_{\infty}=\Phi^*$, implying the desired diagram commutes.
\end{proof}

\begin{lemma}\label{lem:thomandtransfer}
    Let $\pi:E\to B$ be an oriented rank $k$ vector bundle, and $p:\widetilde{B}\to B$ a degree $n$ covering space. Then the multivalued function $\Th(E)\to \SP(\Th(p^*E))$ inverse to the covering map $\bar{p}:p^*E\to E$ induces a transfer map $\bar{p}_!:\widetilde{H}^*(\Th(p^*E))\to \widetilde{H}^*(\Th(E))$ which commutes with the transfer map $p_!$, so that the diagram
    \begin{center}
        \begin{tikzcd}
            \widetilde{H}^{*+k}(\Th(p^*E))\rar["\bar{p}_!"] & \widetilde{H}^{*+k}(\Th(E))\\
            {H}^*(\widetilde{B})\uar["\Th","\cong"'] \rar["p_!"] & H^*(B)\uar["\Th"]
        \end{tikzcd}
    \end{center}
    commutes.
\end{lemma}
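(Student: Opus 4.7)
My approach reduces the lemma to three ingredients: the identification from Example \ref{rem:coveringsandtransfers} of the transfer of a cover as the cohomology map of the inverse multivalued function, the base-change naturality from Proposition \ref{prop:fstarnaturality}, and the projection formula for covering-space transfers together with the naturality of the Thom class.

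First, the pullback of a degree-$n$ covering along a continuous map is again a degree-$n$ covering, so $\bar{p}:p^*E\to E$ is a covering of vector bundles. Passing to fibrewise one-point compactifications and collapsing the infinity-sections, we obtain a based map $\bar{p}_\infty:\Th(p^*E)\to \Th(E)$ which restricts to $\bar{p}$ on the complement of the basepoint $\infty$. Thus by Example \ref{rem:coveringsandtransfers}, the inverse multivalued function $\bar{p}_\infty^{-1}:\Th(E)\to \SP(\Th(p^*E))$ is defined and its cohomology map is the transfer $\bar{p}_!$.

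Next, let $\pi':p^*E\to \widetilde B$ denote the bundle projection, and let $\pi_+,\pi'_+$ be the natural extensions to the Thom spaces sending the basepoint $\infty$ to the added basepoint $+$ of $B\sqcup*$ and $\widetilde B\sqcup *$. The square
\[
\begin{tikzcd}
\Th(E)\rar["\bar{p}_\infty^{-1}"]\dar["\pi_+"'] & \SP(\Th(p^*E))\dar["\SP(\pi'_+)"] \\
B\sqcup * \rar["p^{-1}"'] & \SP^+(\widetilde B)
\end{tikzcd}
\]
commutes, since both composites send $e\in E$ to the formal sum $\sum_{b\in p^{-1}(\pi(e))} b$ (and both send $\infty$ to the zero of $\SP$). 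By Proposition \ref{prop:fstarnaturality}, this yields the base-change identity $\bar{p}_!\circ (\pi'_+)^*=(\pi_+)^*\circ p_!$.

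Writing the Thom isomorphism as $\Th_E(x)=(\pi_+)^*(x)\cup u_E$ with $u_E\in\widetilde H^k(\Th(E))$ the Thom class, and using the naturality $u_{p^*E}=\bar{p}^*(u_E)$ of the Thom class under pullback, the projection formula $\bar{p}_!(a\cup \bar{p}^*(b))=\bar{p}_!(a)\cup b$ gives, for $y\in H^*(\widetilde B)$,
\[
\bar{p}_!(\Th_{p^*E}(y))=\bar{p}_!((\pi'_+)^*(y)\cup \bar{p}^*(u_E))=\bar{p}_!((\pi'_+)^*(y))\cup u_E=(\pi_+)^*(p_!(y))\cup u_E=\Th_E(p_!(y)),
\]
as required. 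The main technical point is the projection formula; while classical, a framework-native proof can be assembled from Proposition \ref{prop:tensorandcohomologymapscommute} applied to the smash $\id\triangle \bar{p}_\infty^{-1}$, combined with the realisation of the cup product as the diagonal pullback of the cross product. The naturality of the Thom class is standard, being a consequence of the uniqueness of a class in $\widetilde H^k$ restricting to the chosen generator on each fibre sphere.
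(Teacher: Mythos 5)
Your outline (identify $\bar p_!$ as the cohomology map of the inverse multivalued function via Example \ref{rem:coveringsandtransfers}, then combine a base-change identity with the projection formula and naturality of the Thom class) is the same skeleton as the paper's proof, but the step carrying the actual content is not well-formed. The maps $\pi_+\colon\Th(E)\to B\sqcup *$ and $\pi'_+\colon\Th(p^*E)\to\widetilde B\sqcup *$ do not exist as continuous maps: the bundle projection does not extend over the point $\infty$ of the Thom space, since the preimage of the isolated basepoint $*$ would have to be the non-open set $\{\infty\}$ (vectors of large norm over every point of $B$ converge to $\infty$ while their projections fill out $B$). Consequently the square you feed into Proposition \ref{prop:fstarnaturality} has no continuous vertical arrows, the identity $\bar p_!\circ(\pi'_+)^*=(\pi_+)^*\circ p_!$ is not established, and the formula $\Th_E(x)=(\pi_+)^*(x)\smile u_E$ is not literally meaningful. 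The correct substitutes are either the Thom diagonal $\Th(E)\to (B\sqcup *)\wedge\Th(E)$, $e\mapsto(\pi(e),e)$, which \emph{is} continuous, or, as the paper does, the pair $(\bbD(E),\bbS(E))$, where $\pi$ restricts to an honest map $\bbD(E)\to B$ and the Thom isomorphism is the relative cup product with a cochain representative of the Thom class.

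Second, the ``main technical point'' you defer does not assemble the way you sketch. To extract the projection formula from Proposition \ref{prop:tensorandcohomologymapscommute} one must also commute $\bar p_\infty^{-1}$ past a diagonal-type map using Proposition \ref{prop:fstarnaturality}, and the relevant square fails to commute: writing $\tilde e_1,\dots,\tilde e_n$ for the lifts of $e\in E$, composing $\bar p_\infty^{-1}$ with $\SP$ of the diagonal of $\Th(p^*E)$ gives only the matched pairs $\sum_j(\tilde e_j,\tilde e_j)$, whereas $(\bar p_\infty^{-1}\triangle\,\bar p_\infty^{-1})$ applied to the diagonal of $\Th(E)$ gives all pairs $\sum_{i,j}(\tilde e_i,\tilde e_j)$. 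So the projection formula is not a formal consequence of naturality in this framework; this is precisely why the paper first transports $\bar p_!$ to the classical simplex-lifting transfer on $(\bbD(E),\bbS(E))$ by excision and then verifies both identities, $\bar p_!(a\smile\bar p^*u_E)=\bar p_!(a)\smile u_E$ and $\bar p_!(\widetilde\pi^*\alpha)=\pi^*(p_!\alpha)$, directly at the cochain level. Your final displayed computation is the right one, but as written both identities it relies on are unproved, and one of them is formulated with maps that do not exist.
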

\begin{proof}
    For both bundles $F=E, p^*E$, we replace the pair $(\Th(F),\infty)$ by the pair $(\bbD(F),\bbS(F))$ which is isomorphic by a radial homeomorphism, so that $\bar{p}:(\bbD(p^*E),\bbS(p^*E))\to (\bbD(E),\bbS(E))$ is a degree $n$ covering space of pairs. Under the excision isomorphisms $\bar{p}_!:H^*(\bbD(p^*E),\bbS(p^*E))\to H^*(\bbD(E),\bbS(E))$ is equal to the transfer map defined in the classical way, namely on singular cochains, it is the dual of the chain map: \begin{align*}
        \bar{p}^!:C_*(\bbD(E),\bbS(E))&\longrightarrow C_*(\bbD(p^*E),\bbS(p^*E))\\
        \sigma &\longmapsto \sum_n \widetilde{\sigma}
    \end{align*}
    where the summation is taken over all $n$ $\bar{p}$-preimages of the simplex $\sigma$. The transfer map $p_!$ has a similar definition. We pick a cochain $u_E\in C^k(\bbD(E),\bbS(E))$ representing the Thom class of $E\to B$, so that $p^*(u_E)$ represents the Thom class of $p^*E\to \widetilde{B}$. It suffices to check that the diagram
    \begin{center}
        \begin{tikzcd}
            C^{i+k}(\bbD(E),\bbS(E))\rar["\bar{p}_!"] &   C^{i+k}(\bbD(E),\bbS(E))\\
            C^i(\widetilde{B})\uar["\widetilde{\pi}^*(-)\smile \bar{p}^*u_E"]\rar["p_!"] & C^i(B)\uar["\pi^*(-)\smile u_E"]
        \end{tikzcd}
    \end{center}
    commutes, i.e. that for any class $\alpha$, we have $$\pi^*(p_!(\alpha))\smile u_E=\bar{p}_!(\widetilde{\pi}^*(\alpha)\smile \bar{p}^*u_E).$$
    But from the definition of the cup product we have $\bar{p}_!(\widetilde{\pi}^*(\alpha)\smile \bar{p}^*u_E)=\bar{p}_!(\widetilde{\pi}^*(\alpha))\smile u_E$, and then by evaluating on a simplices, $\bar{p}_!(\widetilde{\pi}^*(\alpha))=\pi^*(p_!(\alpha))$.    
\end{proof}

\section{Proof of Theorem \ref{thm:B}}
\begin{proposition}\label{prop:sigmaismultiplicative}
For any $l\ge 1$, the multivalued function $\sigma$ induces a multiplicative isomorphism $$\sigma^*:\widetilde{H}^*(C(M;S^{2l}))\longrightarrow \bigoplus_{n\ge 1}\widetilde{H}^{*}(C_n(M))[2ln]$$
with the cup-product in the domain and $\sup$ in the target.
\end{proposition}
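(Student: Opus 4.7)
The idea is to chain together the three commutativity statements already established — Corollary \ref{cor:DeltacommuteswithPhi}, Proposition \ref{prop:tensorandcohomologymapscommute} applied to $f=g=\sigma$, and Proposition \ref{Prop:supcommuteswithPhi} — which between them swap $\sigma^*$ past $\Delta^*$, swap $(\sigma \triangle \sigma)^*$ past the cross product, and intertwine $\Phi^*$ with $\sup$ via the Thom isomorphism. This reduces the proposition to routine bookkeeping.

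First I would establish the target of the claimed isomorphism. Proposition \ref{prop:sigmaisiso} gives that $\sigma^*: \widetilde{H}^*(V) \to \widetilde{H}^*(C(M;S^{2l}))$ is an isomorphism, and since $V = \bigvee_{n \geq 1} D_n$ this target splits as $\bigoplus_n \widetilde{H}^*(D_n)$. Each $D_n$ is the Thom space of an oriented rank-$2ln$ vector bundle over $C_n(M)$ (as recalled in the proof of Lemma \ref{lem:configurationmapnullhomotopic}), so the Thom isomorphism rewrites the target in the desired form $\bigoplus_n \widetilde{H}^{*}(C_n(M))[2ln]$.

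For multiplicativity, I would compute, for $\alpha, \beta \in \widetilde{H}^*(C(M;S^{2l}))$,
\[
\sigma^*(\alpha \smile \beta) = \sigma^*\bigl(\Delta^*(\alpha \times \beta)\bigr) = \Phi^*\bigl((\sigma \triangle \sigma)^*(\alpha \times \beta)\bigr) = \Phi^*\bigl(\sigma^*(\alpha) \times \sigma^*(\beta)\bigr),
\]
using the standard cup-product formula first, then Corollary \ref{cor:DeltacommuteswithPhi}, then Proposition \ref{prop:tensorandcohomologymapscommute}. Decomposing along $V \wedge V = \bigvee_{n,m} D_n \wedge D_m$, the explicit summation defining $\Phi$ shows that only the components $\Phi^{(n+m,n,m)}: D_{n+m} \to \SP(D_n \wedge D_m)$ are non-zero, so $\Phi^*$ acts as a direct sum of the maps $\Phi^{(n+m,n,m)*}$. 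Proposition \ref{Prop:supcommuteswithPhi} then identifies each $\Phi^{(n+m,n,m)*}$ with the superposition map $\sup_{n,m}$ after the Thom isomorphism, and the conclusion reads off.

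The main obstacle is not any single difficult step, but careful compatibility-checking: I need the Thom isomorphism to respect the external cross product, so that the cross product on $\bigoplus_n \widetilde{H}^*(D_n)$ transports to the cross product on $\bigoplus_n H^*(C_n(M))$ that implicitly feeds $\sup$. This is the standard multiplicativity of Thom classes under external product of oriented vector bundles, applied to the pair of bundles whose Thom spaces are $D_n$ and $D_m$, giving that $D_n \wedge D_m$ is the Thom space of their external product with Thom class the cross product of the two. Once this is in place, the equality $\sigma^*(\alpha \smile \beta) = \sigma^*(\alpha) \cdot \sigma^*(\beta)$, with $\cdot$ the superposition product, is immediate from the displayed chain.
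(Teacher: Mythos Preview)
Your proposal is correct and takes essentially the same approach as the paper: the paper packages the same three ingredients—Proposition~\ref{prop:tensorandcohomologymapscommute}, Corollary~\ref{cor:DeltacommuteswithPhi}, and Proposition~\ref{Prop:supcommuteswithPhi}—together with the multiplicativity of the Thom isomorphism under external products into a single large commutative diagram, whereas you unwind the same diagram as a chain of equalities. One cosmetic point: in your displayed chain you apply $\sigma^*$ to classes $\alpha,\beta\in\widetilde{H}^*(C)$, but $\sigma^*$ goes $\widetilde{H}^*(V)\to\widetilde{H}^*(C)$, so the composites $\sigma^*\circ\Delta^*$ and $\Phi^*\circ(\sigma\triangle\sigma)^*$ as written do not typecheck; since $\sigma^*$ is an isomorphism this is harmless, but the chain runs cleanly if you instead start with classes in $\widetilde{H}^*(V)$ and show $\sigma^*(a)\smile\sigma^*(b)=\Delta^*\big((\sigma\triangle\sigma)^*(a\times b)\big)=\sigma^*\big(\Phi^*(a\times b)\big)$.
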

\begin{proof}
For positive $l$, and a fixed $i$, the abelian group $H^{i-2ln}(C_n(M))\cong \widetilde{H}^i(D_n)$ is non-zero only for finitely many $n$, which justifies the identification
$$\widetilde{H}^i(\vee_{n\ge 1}D_n)\cong \bigoplus_{n\ge 1}\widetilde{H}^i( D_n),$$
with a direct sum as opposed to a direct product. That $\sigma^*$ is an isomorphism was proved in Proposition \ref{prop:sigmaisiso}.

For the multiplicativity, we look at the following diagram (where $C_n$ is shorthand for $C_n(M)$ for economy of space)
   \begin{equation*}\small
        \begin{tikzcd}
            \widetilde{H}^*(C)\otimes \widetilde{H}^*(C)\dar["\times"]\rar["\sigma^*\otimes \sigma^*"]
            & \widetilde{H}^*(\vee_{n\ge 1}D_n)\otimes \widetilde{H}^*(\vee_{n\ge 1}D_n)   \dar["\times"]\rar["\Th\otimes\Th"]
            &  \bigoplus_{n,m\ge 1}{H}^{*}(C_n)\otimes {H}^{*}(C_n)\dar["\times"]
            \\
            \widetilde{H}^*(C\wedge C)\rar["(\sigma\triangle\sigma)^*"]\dar["\Delta^*"]
            &\widetilde{H}^*(\vee_{n\ge 1}D_n\wedge \vee_{n\ge 1}D_n)\rar["\Th"]\dar["\Phi^*"]
            & \bigoplus_{n,m\ge 1}{H}^{*}(C_n\times C_m)\dar["\sup"]
            \\
            \widetilde{H}^*(C)\rar["\sigma^*"]
            & \widetilde{H}^*(\vee_{n\ge 1}D_n)\rar["\Th"]
             & \bigoplus_{n\ge 1}{H}^* (C_n).
        \end{tikzcd}
    \end{equation*}
    Here every square commutes: the top-left from Proposition \ref{prop:tensorandcohomologymapscommute}, the bottom-left from Corollary \ref{cor:DeltacommuteswithPhi}, the top-right because the Thom isomorphism commutes with the cross-product, and the bottom-right by Proposition \ref{Prop:supcommuteswithPhi}. Looking only at the corners of the large square, the leftmost vertical composition is the cup product and the rightmost column is the superposition product, whereas the top-most and bottom maps are the scanning maps. This commutativity proves exactly that $\sigma^*$ respects the products.
\end{proof}

\begin{theorem}\label{thm:main}
There is an isomorphism of graded rings $$s:\left({H}^*(\Gamma_{\partial}(M,S^{2l})),\smile\right)\longrightarrow \Big(\bigoplus_{n\ge 0}H^*(C_n(M))[2nl],\sup\Big).$$
\end{theorem}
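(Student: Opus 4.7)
The plan is to reduce Theorem \ref{thm:main} to Proposition \ref{prop:sigmaismultiplicative}, which already establishes the multiplicativity statement using the labelled-configuration model $C(M;S^{2l})$. The only missing ingredient is an identification of the cohomology of that model with the cohomology of the section space $\Gamma_{\partial}(M,S^{2l})$, together with verification that the identification respects cup products on both sides.

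First I would invoke the classical space-level scanning equivalence from \cite{bodigheimer88rationalcohomologyofsurfaces, BODIGHEIMER1989111generalconfigurations, Tillmanthorpe2014}: for $l\ge 1$ there is a continuous based map
\begin{equation*}
\mathrm{scan}\colon C(M;S^{2l})\longrightarrow \Gamma_{\partial}(M,S^{2l})
\end{equation*}
that is a weak equivalence. Because this is an honest continuous map of based spaces, $\mathrm{scan}^*$ is automatically a graded ring isomorphism with respect to the cup product on both sides; no extra multiplicative bookkeeping is required at this step.

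Next I would factor $s$ through this equivalence, writing it as the composition
\begin{equation*}
H^*(\Gamma_{\partial}(M,S^{2l}))\xrightarrow{\mathrm{scan}^*}\widetilde{H}^*(C(M;S^{2l}))\xrightarrow{\sigma^*}\bigoplus_{n\ge 0}H^*(C_n(M))[2ln],
\end{equation*}
where on the right the $n=0$ summand is $H^0(C_0(M))=\Z$, supplying the unit and accounting for the $\widetilde{H}^0$ versus $H^0$ discrepancy on the left (both spaces being connected). Proposition \ref{prop:sigmaismultiplicative} tells us that the second arrow sends cup to superposition, and the first arrow sends cup to cup tautologically; composing, $s$ sends cup to superposition, as desired.

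The main potential obstacle is purely expository: one must check that the isomorphism $s$ in the statement of Theorem A is indeed the above factored composition. This is by construction the case in the Tillmann--Thorpe framework \cite{Tillmanthorpe2014} adopted in the paper, so once the setup of Section \ref{sec:scanningrevisited} is accepted the theorem follows by a one-line composition of ring homomorphisms.
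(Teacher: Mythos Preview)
Your proposal is correct and follows essentially the same route as the paper's proof: both reduce the theorem to Proposition~\ref{prop:sigmaismultiplicative} together with a multiplicative identification $H^*(\Gamma_\partial(M,S^{2l}))\cong \widetilde{H}^*(C(M;S^{2l}))$ coming from the space-level scanning equivalence (which the paper records as Proposition~3.6 and Corollary~4.2 of \cite{Tillmanthorpe2014}), and then adjoin the unit via the $n=0$ summand. The only cosmetic difference is that the paper cites the two Tillmann--Thorpe results by number rather than spelling out, as you do, that the relevant input is an honest continuous weak equivalence and hence automatically a cup-product isomorphism.
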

\begin{proof}
    The restricted map $s:\widetilde{H}^*(\Gamma_{\partial}(M,S^{2l}))\to \bigoplus_{n\ge 1}H^*(C_n(M))[2nl]$ is an isomorphism of non-unital rings by combining Proposition 3.6 and Corollary 4.2 of \cite{Tillmanthorpe2014}, with our Proposition \ref{prop:sigmaismultiplicative}. (That the target is a ring, i.e. that $\sup$ is associative and graded commutative, follows \textit{a posteriori} since $\smile$ has these properties.) Passing to the unreduced cohomology ${H}^*(\Gamma_{\partial}(M,S^{2l}))$ adds back the unit to the cohomology ring. Similarly, the summand $H^*(C_0(M))[0]\cong \Z[0]$ can be checked to be generated by the unit for $\sup$, making the extended isomorphism $s$ an isomorphism of unital rings. 
\end{proof}

\section{An alternative definition of the superposition product}\label{sec:alternativedefinition}
We give an alternative definition of the superposition product appearing in \cite{bianchi2021mapping,Moriyama, randalwilliams2023configuration}. Let $C_n(M)^{\infty}$ be the one point compactification of $C_n(M)$, where the point $\infty$ corresponds to configurations where two points collide or one point goes to the boundary. It is straightforward to define a superposition map 
\begin{align*}
        \mu_{n,m}:C_n(M)^{\infty}\wedge C_m(M)^{\infty}&\longrightarrow  C_{n+m}(M)^{\infty}\\
         (s,t)&\longmapsto \begin{cases}
        \infty \text{ if } s\cap t\neq \emptyset \text{ or } s=\infty \text{ or } t=\infty, \\
        s\cup t \text{ otherwise.}
        \end{cases} 
    \end{align*}

From now on assume that $M$ is orientable. We observe that $C_n(M)^{\infty}\wedge C_m(M)^{\infty}=(C_n(M)\times C_n(M))^\infty$. Using the Poincare--Lefschetz duality isomorphism $\PL$ and the homology map $(\mu_{n,m})_*$, we define $\widetilde{\sup}_{n,m}$ so that the diagram
\begin{equation*}
    \begin{tikzcd}
        H^*(C_n(M)\times C_m(M))\rar[dashed, "\widetilde{\sup}_{n,m}"]\dar["\cong","\PL"'] & H^*(C_{n+m}(M))\dar["\cong","\PL"']\\
        \widetilde{H}_{d(n+m)-*}(C_n(M)^{\infty}\wedge C_m(M)^{\infty})\rar["(\mu_{n,m})_*"]& \widetilde{H}_{d(n+m)-*}(C_{n+m}(M)^{\infty})
    \end{tikzcd}
\end{equation*}
commutes. 

\begin{proposition}\label{prop:equivalentdefinitions}
    The map $\widetilde{\sup}_{n,m}$ coincides with $\sup_{n,m}$ from Section \ref{sec:superpositionprodefinition}.
\end{proposition}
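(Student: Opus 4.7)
The plan is to factor $\mu_{n,m}$ into two geometrically natural pieces and, under Poincar\'e--Lefschetz duality, match these to $i^*$ and $p_!$ respectively.

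Since $C_{n,m}(M)$ is open in $C_n(M)\times C_m(M)$, there is a collapse map
\[
c:(C_n(M)\times C_m(M))^{\infty}\longrightarrow C_{n,m}(M)^{\infty}
\]
sending every point of the complement to the basepoint. Since $p:C_{n,m}(M)\to C_{n+m}(M)$ is a finite (hence proper) covering, it extends to a map $p^{\infty}:C_{n,m}(M)^{\infty}\to C_{n+m}(M)^{\infty}$ on one-point compactifications. Unpacking the definition of $\mu_{n,m}$, one sees that $\mu_{n,m}=p^{\infty}\circ c$.

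Next, I would apply the naturality of Poincar\'e--Lefschetz duality to each factor separately. For the open inclusion $i$, the compatibility $\PL\circ i^*=c_*\circ \PL$ comes from the duality between cohomological restriction along an open inclusion and the collapse map on Borel--Moore homology (equivalently, on the reduced homology of the one-point compactification). For the covering $p$, the analogous identity $\PL\circ p_!=(p^{\infty})_*\circ \PL$ says that the transfer $p_!$ is exactly the proper pushforward induced by $p^{\infty}$. Combining these gives
\[
\widetilde{\sup}_{n,m}=\PL^{-1}\circ (\mu_{n,m})_*\circ \PL=\PL^{-1}\circ (p^{\infty})_*\circ c_*\circ \PL=p_!\circ i^*=\sup_{n,m}.
\]

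The main technical obstacle is the covering-map identity $\PL\circ p_!=(p^{\infty})_*\circ \PL$: verifying that the cohomology transfer $p_!$ really coincides with the proper pushforward coming from $p^{\infty}$. I would check this at the cochain level, in the same spirit as Lemma \ref{lem:thomandtransfer}: choose a fundamental cocycle $u$ on $C_{n+m}(M)$, so that $p^*u$ is a fundamental cocycle on $C_{n,m}(M)$ since $p$ is a local diffeomorphism, and then verify that cap product with these fundamental cocycles intertwines the cochain-level definition of the transfer $\sigma\mapsto \sum_{\tilde\sigma}\tilde\sigma$ (summing over preimages) with the simplex-wise pushforward $\tilde\sigma\mapsto p\circ\tilde\sigma$ that induces $(p^{\infty})_*$ on singular chains. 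The open-inclusion compatibility is a classical fact about Poincar\'e--Lefschetz duality.
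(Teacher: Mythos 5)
Your proposal is correct and follows essentially the same route as the paper: factor $\mu_{n,m}$ through $C_{n,m}(M)^{\infty}$ via the collapse map induced by the open inclusion $i$ followed by the compactified covering $p^{\infty}$, then identify the two pieces under Poincar\'e--Lefschetz duality with $i^*$ and $p_!$ respectively. The paper simply invokes these two duality compatibilities as standard (restriction to an open subset dualizes to the collapse map, and the Gysin pushforward of a finite covering equals the transfer), whereas you additionally sketch a cochain-level verification of the latter in the spirit of Lemma \ref{lem:thomandtransfer}; this is fine but not needed beyond citing the standard facts.
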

\begin{proof}
    Open inclusions give opposite maps after compactifying, thus $\mu_{n,m}$ factors as
\begin{center}
    \begin{tikzcd}
        C_n(m)^{\infty}\wedge C_m(m)^{\infty}\rar[hook,"i^{\infty}"] & C_{n,m}(M)^{\infty}\rar["p^{\infty}"] & C_{n+m}(M)^{\infty}.
    \end{tikzcd}
\end{center}
Consequently, we may factor $$\widetilde{\sup}_{n,m}=(\PL^{-1}\circ p^{\infty}_* \circ \PL)\circ(\PL^{-1}\circ i^{\infty}_* \circ \PL),$$
using the $\PL$ isomoprhism for $C_{n,m}(M)$. By standard properties of Poincar\'e--Lefschetz duality $(\PL^{-1}\circ i^{\infty}_* \circ \PL)$ is equal to the cohomology map $i^*$, whereas the Gysin map $(\PL^{-1}\circ p^{\infty}_* \circ \PL)$ is equal to the transfer map $p_!$. As a result $\widetilde{\sup}_{n,m}=p_!\circ i^*=\sup_{n,m}$.
\end{proof}
    Proposition \ref{prop:equivalentdefinitions} can also be proved for non-orientable manifolds, but care must be taken in using the orientation sheaf for Poincar\'e--Lefschetz duality.
\bibliographystyle{amsalpha}

\bibliography{biblio}

\end{document}